\documentclass{amsart}
\textwidth=16cm
\oddsidemargin=0pt
\evensidemargin=0pt

\usepackage[colorlinks,citecolor=blue,urlcolor=blue,linkcolor=blue]{hyperref}
\usepackage{amsmath}
\usepackage{amsfonts}
\usepackage{amssymb}
\usepackage[left=2.9cm,right=2.9cm,top=3cm,bottom=3cm]{geometry}

\def\R{{\mathbb R}}

\def\N{{\mathbb N}}

\def\<{\langle}
\def\>{\rangle}
\def\P{\mathbb P}

\def\E{\mathbb E}

\def\0{\underline 0}

\def\1{\underline 1}

\numberwithin{equation}{section}

\newcommand{\bel}{\begin{equation}\label}
	\newcommand{\nobel}{\begin{equation}}
		\newcommand{\ee}{\end{equation}}
	\newtheorem{theorem}{Theorem}[section]
	
	\newtheorem{corollary}[theorem]{Corollary}
	\newtheorem{lemma}[theorem]{Lemma}
	\newtheorem{remark}{Remark}[section]
	\theoremstyle{definition}


    \title[Probability laws associated to the independence preserving quadrirational Yang-Baxter  maps]{Probability laws associated to \\ the independence preserving quadrirational Yang-Baxter  maps  - the ultimate case}
	\author{Bartosz Ko{\l}odziejek}
        \address{Faculty of Mathematics and Information Sciences, Warsaw University of Technology, Koszykowa 75, \mbox{00-662} Warsaw, Poland}
\email{bartosz.kolodziejek@pw.edu.pl}
	\author{G\'erard Letac}
    \address{Laboratoire de Statistique et Probabilit\'es, Universit\'e Paul Sabatier, 118 Route de Narbonne, 31062 Toulouse,  \& TESA, 7 Bd de la Gare, 31500, Toulouse, France}
    \email{gerard.letac@math.univ-toulouse.fr}
	    \author{Mauro Piccioni}
    \address{Dipartimento di Matematica, Sapienza Universit\`a di Roma, Piazzale Aldo Moro 5, 00185 Roma, Italy.}
    \email{mauro.piccioni@uniroma1.it}
    
	\author{Jacek Weso{\l}owski}
 \address{Faculty of Mathematics and Information Sciences, Warsaw University of Technology, Koszykowa 75, \mbox{00-662} Warsaw,
Poland}
\email{jacek.wesolowski@pw.edu.pl}

	\begin{document}
		\begin{abstract}
A map $F\colon\mathcal X\times\mathcal Y\to \mathcal U\times \mathcal V$ is said to be independence preserving (IP) if there exists a pair of independent random variables $(X,Y)$ valued in $\mathcal X\times\mathcal Y$ such that the two coordinates of  $(U,V)=F(X,Y)\in\mathcal U\times\mathcal V$ are also independent. Recently, Sasada and Uozumi (2024) observed that a hierarchy of quadrirational Yang-Baxter maps gives rise to independence preserving transformations, and identified corresponding families of probability distributions. 
In view of the limiting properties of these IP maps, the newly defined generalized second kind beta ($\mathrm{GB}_{II}$) model stands at the top of the hierarchy: for independent random variables $X$ and $Y$ following a $\mathrm{GB}_{II}$ distribution, Sasada and Uozumi (2024) showed that when a special quadrirational Yang-Baxter map $F^{(\alpha,\beta)}$, parameterized by  $(\alpha,\beta)\in(0,\infty)^2$, is applied to the pair $(X,Y)$, it produces another pair $(U,V)$ of independent  $\mathrm{GB}_{II}$-distributed random variables. Interestingly, the boundary cases of $\alpha\in\{0,\infty\}$ (or $\beta\in\{0,\infty\}$) are related to one of the Matsumoto-Yor IP maps identified in Koudou and Vallois (2012). 
			
The aim of this paper is to show that the IP property of $F^{(\alpha,\beta)}$ uniquely identifies distributions of $X,Y,U$ and $V$ as belonging to the $\mathrm{GB}_{II}$ family. To this end, we introduce specially designed Laplace-type transforms. First, we carefully explain the connection between the results from  Sasada and Uozumi (2024) and Koudou and Vallois (2012). Next, we focus on the characterization of the second kind beta and the generalized second kind beta distributions through the IP map $F^{(\alpha,\infty)}$. Finally, extending considerably the methodology developed for the case $(\alpha,\infty)$, we prove the characterization of $\mathrm{GB}_{II}$ distributions in the case $(\alpha,\beta)\in(0,\infty)^2$ with $\alpha\neq\beta$, which implies uniqueness in the ultimate missing case of the quadrirational Yang-Baxter hierarchy of IP models. 
		\end{abstract}
		
		\maketitle
		
			\section{Preliminaries}
		Let $(X,Y)$ be a pair of independent random variables valued in $\mathcal X\times\mathcal Y$. We say that a function $F\colon\mathcal X\times \mathcal Y\to\mathcal U\times\mathcal V$ is an IP (independence preserving) map if there exists a pair $(X,Y)$ of independent  $\mathcal X\times\mathcal Y$-valued random variables such that $(U,V)=F(X,Y)$ is a pair of independent $\mathcal U\times\mathcal V$-valued random variables. For several such IP maps, corresponding  families of distributions of $X$, $Y$, $U$ and $V$ have been identified.  Some of these results are quite classical: e.g., the Kac-Bernstein characterization of the normal law for the map $F(x,y)=(x+y,x-y)$ (defined on $\mathbb R^2$) \cite{Kac,Ber}, or the Lukacs characterization of the gamma laws for the map $F(x,y)=(x+y,x/y)$ (defined on $(0,\infty)^2$) \cite{Lukacs}. A recent revival of interest in this area is due to the fact that IP maps are fundamental for the construction of integrable probabilistic models  (see, e.g., \cite{CroSas2020}, \cite{CroSas2022}, \cite{AriBisOCo2023}), in particular the Lukacs property is crucial for the log-gamma polymer (see, e.g., \cite{Sep2012}). Indeed, all four known types of $1:1$ random directed polymers are based on IP maps related to independence characterization of gamma or beta distributions, see \cite{CorSepShe2015}, \cite{OCoOrt2015}, \cite{BarCor2017}, \cite{ThiLeD2017}, \cite{ChaNoa2018}.    
		
		Recently, in \cite{SasUoz2024} the authors discussed a hierarchy of so-called $[2:2]$ quadrirational Yang-Baxter transformations, all of which are IP maps.  This family was introduced in \cite{PSTV2010} (see also \cite{ABS2004}). These maps include IP maps for the generalized inverse Gaussian laws (they generalize the Matsumoto-Yor property discovered in \cite{MatYor2001}, \cite{MatYor2003}), see \cite{LetWes2024} for a related characterization, as well as the IP maps for the Kummer distribution, see \cite{HamzaVallois}, \cite{KouWes2024}. At the top of the hierarchy described in \cite{SasUoz2024} is the function $F^{(\alpha,\beta)}\colon(0,\infty)^2\to(0,\infty)^2$, $\alpha,\beta>0$ defined by
		\begin{equation}\label{Fab}
			F^{(\alpha,\beta)}(x,y)=\left(\tfrac{y}{\alpha}\,\tfrac{\beta+\alpha x+\beta y+\alpha\beta xy}{1+x+y+\beta xy},\;\tfrac{x}{\beta}\,\tfrac{\alpha+\alpha x+\beta y+\alpha\beta xy}{1+x+y+\alpha xy}\right);
		\end{equation}
		in particular, $F^{(\alpha,\alpha)}(x,y)=(y,x)$. It coincides with the map $H_I^+$ from \cite{PSTV2010}.  Many well-known examples of IP mappings can be derived from $F^{(\alpha,\beta)}$ by taking special parameters or
		performing a singular limit with an appropriate coordinate-wise change of variables, see the diagram at page 4 and Section 3.2 in \cite{SasUoz2024}. 
		
		It was shown in \cite{SasUoz2024} that $F^{(\alpha,\beta)}$ is an IP map for a distribution that the authors call the generalized second kind beta.  This distribution, that we denote by $\mathrm{GB}_{II}(\nu,p,q;\gamma)$, is defined through its density
		\begin{equation}\label{dens}
			f(x) =\tfrac{1}{B(q+\nu,p-\nu)\,{}_2F_1(p+\nu,q+\nu;p+q;1-\gamma)}\,\tfrac{x^{q+\nu-1}}{(1+\gamma x)^{p+\nu}(1+x)^{q-\nu} }\,\mathbf 1_{(0,\infty)}(x),
		\end{equation}
		with $\gamma\ge 0$, $p, q>0$ and $-q<\nu<p$. Here $B$ is the beta function,
		$$B(a,b)=\tfrac{\Gamma(a)\Gamma(b)}{\Gamma(a+b)},\quad a,b>0,$$ 
		and $_2F_1$ is the Gauss hypergeometric function, defined as follows:
		$$
		_2F_1(a,b,c;z)=\tfrac1{B(b,c-b)}\,\int_0^1\,t^{b-1}(1-t)^{c-b-1}(1-zt)^{-a}\,dt, \quad c>b>0,\;z<1.
		$$
		Note that $\mathrm{GB}_{II}(\nu,p,q;1) = \mathrm{B}_{II}(\nu+q, p-\nu)$ and,  for $\nu <0$,  $\mathrm{GB}_{II}(\nu,p,q;0) = \mathrm{B}_{II}(\nu+q, -2\nu)$, where $\mathrm{B}_{II}(a,b)$, $a,b>0$ stands for the standard beta distribution of the second kind,  defined by the density
		$$
		f(x)=\tfrac1{B(a,b)} \tfrac{x^{a-1}}{(1+x)^{a+b}}\mathbf 1_{(0,\infty)}(x).
		$$
		
		The following result is given in \cite{SasUoz2024}:
		\begin{theorem}\label{SU22}
			Let $\alpha,\beta>0$, $\lambda\in\R$, $a,b>0$ be such that $|\lambda|<\min\{a,b\}$. Assume that random variables $X$ and $Y$ satisfy
			\begin{equation}\label{XY}
				(X,Y)\sim \mathrm{GB}_{II}(\lambda,a,b;\alpha)\otimes \mathrm{GB}_{II}(-\lambda,a,b;\beta).
			\end{equation}
			Then, $(U,V)=F^{(\alpha,\beta)}(X,Y)$ satisfies
			\begin{equation}\label{UV}
				(U,V)\sim\mathrm{GB}_{II}(-\lambda,a,b;\alpha)\otimes \mathrm{GB}_{II}(\lambda,a,b;\beta).
			\end{equation}
		\end{theorem}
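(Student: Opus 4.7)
The plan is a direct change-of-variables computation. Since $F^{(\alpha,\beta)}$ is a rational bijection of $(0,\infty)^2$ (its inverse is again of quadrirational type, cf.\ \cite{PSTV2010}), the proof reduces to (i) the algebra of the map, (ii) its Jacobian, and (iii) matching functional forms on both sides.

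For the algebra, introduce
$$P = 1 + x + y + \beta xy,\qquad Q = 1 + x + y + \alpha xy,$$
$$R = \beta(1+y) + \alpha x(1+\beta y),\qquad S = \alpha(1+x) + \beta y(1+\alpha x),$$
so that $u = yR/(\alpha P)$ and $v = xS/(\beta Q)$. The crucial preliminary step consists of the four polynomial identities
$$\alpha P + yR = (1+y)S,\qquad P + yR = (1+\beta y)Q,$$
$$\beta Q + xS = (1+x)R,\qquad Q + xS = (1+\alpha x)P,$$
each verifiable by direct expansion. Dividing through, they yield the clean fraction representations
$$1+u = \tfrac{(1+y)S}{\alpha P},\ \ 1+\alpha u = \tfrac{(1+\beta y)Q}{P},\ \ 1+v = \tfrac{(1+x)R}{\beta Q},\ \ 1+\beta v = \tfrac{(1+\alpha x)P}{Q},$$
which supply exactly the factors appearing in the target $\mathrm{GB}_{II}$ density from \eqref{UV}.

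For the Jacobian, the auxiliary relations $\alpha P - R = (\alpha-\beta)(1+y)$ and $\beta Q - S = -(\alpha-\beta)(1+x)$ give
$$\tfrac{\partial u}{\partial x} = \tfrac{(\alpha-\beta)y(1+y)(1+\beta y)}{\alpha P^2},\qquad \tfrac{\partial v}{\partial y} = -\tfrac{(\alpha-\beta)x(1+x)(1+\alpha x)}{\beta Q^2},$$
together with bulkier formulas for $\partial u/\partial y$ and $\partial v/\partial x$. Expanding the $2\times 2$ determinant and invoking the further identity $SP - RQ = (\alpha-\beta)\bigl[(1+x)(1+y)-xy(1+\alpha x)(1+\beta y)\bigr]$, all auxiliary terms cancel, leaving the compact expression
$$\left|\det\tfrac{\partial(u,v)}{\partial(x,y)}\right| = \tfrac{RS}{\alpha\beta PQ}.$$

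Finally, I substitute the four fraction representations into the functional form of $f_U(u)f_V(v)$ and multiply by the Jacobian. Every power of $P$, $Q$, $R$, $S$ then cancels: for instance the net $P$-exponent works out to $-(b-\lambda-1)+(a-\lambda)+(b+\lambda)-(a+\lambda)-1=0$, and similarly for the other three. What remains in $x,y,1+x,1+y,1+\alpha x,1+\beta y$ matches exactly the functional form of $f_X(x)f_Y(y)$ from \eqref{XY}, up to a global constant; because both sides are probability densities on $(0,\infty)^2$, that constant must be $1$, and so $F^{(\alpha,\beta)}$ pushes the law \eqref{XY} forward to \eqref{UV}. The principal obstacle is the bookkeeping in this final step — tracking six separate factors each carrying an exponent linear in $(a,b,\lambda)$ — together with spotting the cubic polynomial identity that drives the Jacobian cancellation; once that identity is in hand, the rest is mechanical.
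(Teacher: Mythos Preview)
Your proof is correct and follows the original Jacobian/change-of-variables route of \cite{SasUoz2024}; the paper explicitly acknowledges that route and then deliberately presents a different one. Instead of computing $\det\partial(u,v)/\partial(x,y)$ and matching density exponents, the paper introduces the hypergeometric-type transforms $L_W^{(\gamma)}(s,\theta,\sigma)=\E[(\tfrac{W}{1+W})^s(\tfrac{\gamma W}{1+\gamma W})^\theta(\tfrac{1}{1+W})^\sigma]$, recasts $(u,v)=F^{(\alpha,\beta)}(x,y)$ as the three multiplicative invariants \eqref{xyuv}, and reduces the theorem to verifying that $L_X^{(\gamma)}(s,\theta,\sigma)/L_U^{(\gamma)}(s,\sigma,\theta)$ is independent of $\gamma$; this last fact follows from the Euler transformation for $_2F_1$ and yields the closed formula \eqref{LX/LU}. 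Your approach is more elementary and fully self-contained---your four identities for $1+u$, $1+\alpha u$, $1+v$, $1+\beta v$ are in fact equivalent to \eqref{xyuv}---but the paper's detour pays for itself later: the transforms $L_W^{(\gamma)}$, the invariants \eqref{xyuv}, and especially the explicit ratio \eqref{LX/LU} are the backbone of the converse characterization (Theorem~\ref{charac}), where a Jacobian argument alone would not suffice.
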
	
		
		The main goal of this paper is to prove the converse result, providing a characterization of $\mathrm{GB}_{II}$  distributions in terms of the IP map $F^{(\alpha,\beta)}$:
		\begin{theorem}\label{charac}
			Assume that $X$ and $Y$ are non-Dirac, positive random variables that are independent. For $\alpha,\beta>0$ with $\alpha\neq \beta$, set $(U,V)=F^{(\alpha,\beta)}(X,Y)$. 
			
			If $U$ and $V$ are independent, then there exist $a,b>0$ and $\lambda\in\mathbb R$ satisfying $\min\{a,b\}>|\lambda|$ such that \eqref{XY} (and consequently \eqref{UV}) hold.
		\end{theorem}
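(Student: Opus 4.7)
My plan is to translate the independence of $(U,V)$ into a multiplicative functional equation for the densities of $X$ and $Y$, and then extract the form \eqref{XY} by means of specially tailored Laplace-type transforms. Since $F^{(\alpha,\beta)}$ is a smooth diffeomorphism of $(0,\infty)^2$ onto itself (being a quadrirational Yang-Baxter map), the change of variables formula combined with the independence of both $(X,Y)$ and $(U,V)$ yields, after an initial regularity bootstrap, a smooth identity of the form
\[
f_X(x)\,f_Y(y)=J(x,y)\,g(u(x,y))\,h(v(x,y)),\qquad x,y>0,
\]
where $J$ is the explicit Jacobian of $F^{(\alpha,\beta)}$ and $g,h$ are the unknown densities of $U$ and $V$. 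The upgrade to smooth and strictly positive densities should follow from a standard mollification argument exploiting that $F^{(\alpha,\beta)}$ is a diffeomorphism with bounded derivatives on compact subsets of $(0,\infty)^2$.

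I would first treat the simpler degenerate limit $\beta\to\infty$, which recovers $F^{(\alpha,\infty)}$ and puts us in the Koudou-Vallois setting. In this limit the density in \eqref{dens} reduces (up to reparameterization) to a second kind beta or generalized second kind beta form, and the natural transform is the two-parameter object
\[
T_\alpha[f](s,t)=\int_0^\infty (1+\alpha x)^{-s}(1+x)^{-t}\,f(x)\,dx,
\]
whose values on $\mathrm{GB}_{II}$ densities are known in closed form via beta integrals. Differentiating the functional equation in $x$ and $y$ separately to eliminate $g$ and $h$ should produce a first-order PDE whose Laplace image is an algebraic, indeed affine, relation between $T_\alpha[f_X]$ and $T_\alpha[f_Y]$. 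Combined with injectivity of $T_\alpha$ on its strip of analyticity, this determines $f_X$ and $f_Y$ up to the allowed parameters.

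The main obstacle is extending this methodology to the genuinely two-parameter case $\alpha\neq\beta$ with both in $(0,\infty)$. The natural transform must now encode all four Yang-Baxter factors appearing in \eqref{Fab}, so one is led to a multiparameter object such as
\[
T_{\alpha,\beta}[f](s,t,u)=\int_0^\infty (1+\alpha x)^{-s}(1+\beta x)^{-t}(1+x)^{-u}\,f(x)\,dx,
\]
designed so that the joint transform of $f_X(x)f_Y(y)$ factorizes cleanly under pullback by $F^{(\alpha,\beta)}$. The hard step will be to show that the resulting system of integral identities forces $f_X$ and $f_Y$ into precisely the $\mathrm{GB}_{II}$ form with matched parameters $a,b,\lambda$, rather than admitting spurious solutions; this is where the $_2F_1$ normalization in \eqref{dens} enters, and the argument should rely on analytic continuation in the transform variables together with injectivity of $T_{\alpha,\beta}$ on positive integrable densities. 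I expect the delicate bookkeeping needed to decouple the two deformation parameters $\alpha,\beta$ inside a single functional equation to be the principal technical difficulty.
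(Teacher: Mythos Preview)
Your plan has genuine gaps at several points.

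First, the regularity bootstrap is not justified, and the paper deliberately avoids it. You assume at the outset that $X,Y$ (hence $U,V$) have smooth strictly positive densities so that you can write a pointwise identity and differentiate it, appealing to a ``standard mollification argument.'' No such argument is available here: you only know that $X,Y$ are non-Dirac. The paper works instead with the transforms
\[
L_W^{(\gamma)}(s,\theta,\sigma)=\E\Bigl[\bigl(\tfrac{W}{1+W}\bigr)^{s}\bigl(\tfrac{\gamma W}{1+\gamma W}\bigr)^{\theta}\bigl(\tfrac{1}{1+W}\bigr)^{\sigma}\Bigr],
\]
which are defined for any positive $W$, and only recovers absolute continuity at the very end, after the distribution has been identified.

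Second, your proposed transform $T_{\alpha,\beta}$ is not the right object, and the claim that the joint transform ``factorizes cleanly under pullback by $F^{(\alpha,\beta)}$'' is the whole difficulty, not a given. The paper's key algebraic observation is that $(u,v)=F^{(\alpha,\beta)}(x,y)$ is equivalent to three identities of the shape
\[
\tfrac{xy}{(1+x)(1+y)}=\tfrac{uv}{(1+u)(1+v)},\quad
\tfrac{\alpha x}{(1+\alpha x)(1+y)}=\tfrac{\beta v}{(1+u)(1+\beta v)},\quad
\tfrac{\beta y}{(1+x)(1+\beta y)}=\tfrac{\alpha u}{(1+\alpha u)(1+v)},
\]
each side being a product of a function of one variable. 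This is what makes the $L_W^{(\gamma)}$ transforms (with $\gamma=\alpha$ for $X,U$ and $\gamma=\beta$ for $Y,V$, not both at once) produce the exact multiplicative identity
$L_X^{(\alpha)}(s,\theta,\sigma)L_Y^{(\beta)}(s,\sigma,\theta)=L_U^{(\alpha)}(s,\sigma,\theta)L_V^{(\beta)}(s,\theta,\sigma)$. Your $T_{\alpha,\beta}$, which carries both parameters in a single one-variable transform, does not interact with these identities.

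Third, even granting a clean transform identity, the route from there to the $\mathrm{GB}_{II}$ form is not ``an affine relation plus injectivity.'' In the paper the discrete difference calculus on $L_W^{(\gamma)}$ produces, for certain ratios $M_W$, a \emph{pair} of relations (one multiplicative, one additive) which together force a quadratic factorization; one branch must then be excluded by a nontrivial support argument. The surviving branch leads to a Cauchy--Pexider equation on $\mathbb N_0$, and finally to a second-order hypergeometric difference equation for $L_U(0,0,\cdot)$ whose spurious solution again has to be ruled out by growth/sign arguments. None of these steps is visible in your outline, and the ``injectivity of $T_{\alpha,\beta}$'' you invoke would not by itself eliminate the extra solutions.
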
	
		This result covers the ultimate missing case of characterizations of IP models from the quadrirational Yang-Baxter hierarchy of Sasada and Uozumi, \cite{SasUoz2024}, with previous cases covered in \cite{LetWes2024} and \cite{KouWes2024}. 
        
        Theorem \ref{charac}  will be proved in Section \ref{proof}, which contains the main technical part of the paper. Earlier, in Section \ref{LTR}, we introduce a hypergeometric-type version of the Laplace transform and show how useful it is for analyzing the independence property given in Theorem \ref{SU22}. In Section \ref{KUVA}, we observe that the Matsumoto-Yor-type independence property of \cite{KouVal2012} (for the first kind beta and the generalized first kind beta law) is equivalent to the independence property generated by the map $F^{(\alpha,\beta)}$ with boundary values $\alpha=\infty$ or $\beta=\infty$. The main result of this section is a characterization of the $\mathrm B_{II}$ and $\mathrm{GB}_{II}$ distributions in this boundary case, its proof being a warm-up for the arguments of Section \ref{proof}, where we considerably develop the methodology of Section \ref{KUVA}.   
		
		\section{Hypergeometric-type Laplace transforms}\label{LTR}

        For $\gamma>0$, $s+\theta, \sigma \ge 0$ and a non-negative random variable $W$  denote
		\begin{equation}\label{Lw}
			L_W^{(\gamma)}(\theta,\sigma,s)=\gamma^\theta\,\E\left[\tfrac{ W^{\theta+s}}{(1+\gamma W)^{\theta}(1+W)^{\sigma+s}}\right]=\E\left[(\tfrac{\gamma W}{1+\gamma W})^{\theta} (\tfrac {1}{1+W})^{\sigma} (\tfrac{W}{1+W})^s\right].
		\end{equation}
		It is clear that $L_W^{(\gamma)}$ uniquely determines the law of $W$. Indeed it is  determined, e.g., by $L_W^{(\gamma)}(k,0,0)$ for $k=1,2,\ldots$. 
		The trivial identities
		$$
		1=\frac {w}{1+w}+\frac {1}{1+w}=\frac {\gamma w}{1+\gamma w}+\frac {1}{\gamma w}\,\frac {\gamma w}{1+\gamma w}
		$$
		imply
		\begin{equation}\label{id1}
			L_W^{(\gamma)}(\theta,\sigma,s)=L_W^{(\gamma)}(\theta+1,\sigma,s)+\gamma^{-1}L_W^{(\gamma)}(\theta+1,\sigma+1,s-1),
		\end{equation}
		and
		\begin{equation}\label{id2}
			L_W^{(\gamma)}(\theta,\sigma,s)=L_W^{(\gamma)}(\theta,\sigma,s+1)+L_W^{(\gamma)}(\theta,\sigma+1,s)
		\end{equation}
		for $\gamma>0$ and $\sigma, \theta+s \ge 0$.\footnote{Identities \eqref{id1} and \eqref{id2} can be understood as linearization properties of difference operators $\Delta_1$ and $\Delta_2$ applied to $L_W$, see \eqref{W}, and together with \eqref{D1D2L} they yield a bilinearization formula for $L_W\,\Delta_1\Delta_2\,L_W-\Delta_2L_W\,\Delta_1\,L_W$, which suggests a link to classical integrable systems.  We thank M. Sasada and R. Willox for discussions on such a connection, however it is not explored further in this paper.	} 
		
		In view of \eqref{dens}, for $W\sim\mathrm{GB}_{II}(\nu,p,q;\gamma)$ we have 
		\begin{equation}\label{LwB}
			L_W^{(\gamma)}(\theta,\sigma,s)=\gamma^\theta\,\tfrac{B(s+\theta+q+\nu,\sigma+p-\nu)_2F_1(\theta+p+\nu,s+\theta+q+\nu;s+\theta+\sigma+p+q;1-\gamma)}{B(q+\nu,p-\nu)\,_2F_1(p+\nu,q+\nu;p+q;1-\gamma)}
		\end{equation}
		\begin{equation}\label{LwB2}
			=\gamma^{\theta}\frac {(q+\nu)^{(s+\theta)}(p-\nu)^{(\sigma)}}{(p+q)^{(s+\theta+\sigma)}}
			\tfrac{_2F_1(\theta+p+\nu,s+\theta+q+\nu;s+\theta+\sigma+p+q;1-\gamma)}{_2F_1(p+\nu,q+\nu;p+q;1-\gamma)}
		\end{equation}
		where $c^{(d)}=\Gamma(c+d)/\Gamma(c)$; in particular, for $d\in \mathbb N$ , $c^{(d)}=\prod_{j=0}^{d-1}\,(c+j)$ is the ascending Pochhammer symbol. 
		
		Throughout the paper, we denote $\mathbb{N}=\{1,2,\ldots\}$ and $\mathbb{N}_0 = \{0,1,\ldots\}$. 
		
		The original proof of Theorem \ref{SU22}, as presented in \cite{SasUoz2024}, involves computing the Jacobian of $F^{(\alpha,\beta)}$ and showing the corresponding identity for densities. Here, we provide an alternative proof based on the transforms \eqref{Lw} of $X,Y,U,V$.

		\begin{proof}[Proof of Theorem \ref{SU22}]
			By straightforward verification, we check that for $x,y,u,v>0$ the relation 
			$$(u,v)=F^{(\alpha,\beta)}(x,y)$$ holds  if and only if  (actually any two of the equalities below imply the third)
			\begin{align}\label{xyuv}\begin{split}
					\tfrac{xy}{(1+x)(1+y)}=\tfrac{uv}{(1+u)(1+v)}, \\
					\tfrac{\alpha x}{(1+\alpha x)(1+y)}=\tfrac{\beta v}{(1+u)(1+\beta v)}, \\
					\tfrac{\beta y}{(1+x)(1+\beta y)}=\tfrac{\alpha u}{(1+\alpha u)(1+v)}.
				\end{split}
			\end{align}
			
			Consider $(U,V)=F^{(\alpha,\beta)}(X,Y)$. Then \eqref{xyuv} and independence of $X$ and $Y$ yield
            \begin{align}\label{XYEE}
            &L_X^{(\alpha)}(\theta,\sigma,s)\,L_Y^{(\beta)}(\sigma,\theta,s)\\
            =&\E\left[\left(\tfrac{\beta V}{(1+U)(1+\beta V)}\right)^{\theta}
			\left(\tfrac{\alpha U}{(1+\alpha U)(1+V)}\right)^\sigma\left(\tfrac{UV}{(1+U)(1+V)}\right)^s\right],\quad (\theta,\sigma,s)\in\Xi,\nonumber
            \end{align}
			where 
            \begin{equation}\label{setF}
				\Xi=\{(\theta,\sigma,s)\in\R^3:\;\theta,\sigma,\theta+s,\sigma+s\ge 0\}.
			\end{equation} Note that the function on the right-had side of \eqref{XYEE} uniquely determines the law of 
			\[
			\left(\tfrac{\beta V}{(1+U)(1+\beta V)},\,\tfrac{\alpha U}{(1+\alpha U)(1+V)},\,\tfrac{UV}{(1+U)(1+V)}\right)
			\]
            and thus the law of $(U,V)$. On the other hand for $U$ and $V$ independent the right-hand side of \eqref{XYEE} equals $L_U^{(\alpha)}(\sigma,\theta,s)\,L_V^{(\beta)}(\theta,\sigma,s)$. 
            
            Therefore, in view of equality \eqref{XYEE}, to prove independence of $U$ and $V$ it suffices to show that for distributions of $X,Y,U,V$ specified in \eqref{XY} and \eqref{UV},
			\begin{equation}\label{Lindep}
				L_X^{(\alpha)}(\theta,\sigma,s)\,L_Y^{(\beta)}(\sigma,\theta,s)=	L_U^{(\alpha)}(\sigma,\theta,s)\,L_V^{(\beta)}(\theta,\sigma,s),\quad (\theta,\sigma,s)\in\Xi,
			\end{equation}	  
			
			Looking at the parameters of the distributions in \eqref{XY} and \eqref{UV}, we see that, to prove \eqref{Lindep}, it suffices to show that for any parameter $\gamma>0$, $X\sim \mathrm{GB}_{II}(\lambda,a,b;\gamma)$ and $U\sim \mathrm{GB}_{II}(-\lambda,a,b;\gamma)$, the quotient $\tfrac{L_X^{(\gamma)}(\theta,\sigma,s)}{L_U^{(\gamma)}(\sigma,\theta,s)}$ does not depend on the value of $\gamma>0$. 
			
			To this end, we refer to \eqref{LwB2}, which gives (recall that $(c)^{(b)}$ is the ascending Pochhammer symbol)
			\begin{align}\label{nicer}
				\tfrac{L_X^{(\gamma)}(\theta,\sigma,s)}{L_U^{(\gamma)}(\sigma,\theta,s)}=
				\tfrac{(a-\lambda)^{(\sigma)}\,(b+\lambda)^{(s+\theta)}}{(a+\lambda)^{(\theta)}\,(b-\lambda)^{(s+\sigma)}}
			\times\gamma^{\theta-\sigma}\tfrac {_2F_1(\theta+a+\lambda,s+\theta+b+\lambda;s+\theta+\sigma+a+b;1-\gamma)_2F_1(a-\lambda,b-\lambda;a+b;1-\gamma)}{_2F_1(a+\lambda,b+\lambda;a+b;1-\gamma)_2F_1(\sigma+a-\lambda,s+\sigma+b-\lambda;s+\sigma+\theta+a+b;1-\gamma)}.
			\end{align}
			By applying the Euler identity
			\begin{equation}\label{EI}
				_2F_1(A,B,C;Z)=(1-Z)^{C-A-B}\,_2F_1(C-A,C-B,C;Z)
			\end{equation}	
			with $$(A,B,C,Z)=(\theta+a+\lambda,\,s+\theta+b+\lambda,\,s+\theta+\sigma+a+b,1-\gamma)$$ and with $$(A,B,C,Z)=(a+\lambda,\,b+\lambda,\,a+b,1-\gamma),$$ and by using  obvious identity $_2F_1(A,B;C;Z)=\,_2F_1(B,A;C;Z)$, we find that the second factor in \eqref{nicer} is 1. Thus,
			\begin{equation}\label{LX/LU}
				\tfrac{L_X^{(\gamma)}(\theta,\sigma,s)}{L_U^{(\gamma)}(\sigma,\theta,s)}=\tfrac{(a-\lambda)^{(\sigma)}\,(b+\lambda)^{(\theta+s)}}{(a+\lambda)^{(\theta)}\,(b-\lambda)^{(\sigma+s)}},
			\end{equation}
			which does not depend on $\gamma$, thereby concluding the proof.
		\end{proof}
		
		\section{\texorpdfstring{The boundary case $\beta=\infty$}{The boundary case of beta=infty}}\label{KUVA}
		
		In this section, we intend to consider the boundary case of Theorems \ref{SU22} and \ref{charac} as $\beta\to \infty$. 
		Instead of proving a single  `if and only if ' result, we prefer to split it in two parts, Theorems  \ref{infin} and \ref{charac1} as we did in the introduction.   Although these statements can be seen as limits of Theorems \ref{SU22} and Theorem \ref{charac}, we provide a direct proof as a warm-up for the methods used in the longer proof in Section \ref{proof}. The remainder of the section recalls several results from the literature that are  equivalent to Theorem \ref{infin}  or are particular cases of Theorem \ref{charac1}. Finally, we will explain that the other boundary case $\beta=0$ can be reduced to the case $\beta=\infty$.

        By taking the limit $\beta\to\infty$ in \eqref{Fab}, we obtain
		\begin{equation}\label{Finf}
			F^{(\alpha,\infty)}(x,y)=\left(\tfrac{1+ y+\alpha xy}{\alpha  x},\;\tfrac{xy(1+\alpha x)}{1+x+y+\alpha xy}\right),
		\end{equation}
		which is an involution on $(0,\infty)^2$.  Similarly for $F^{(\infty,\beta)}$. 
		
		To study this boundary case, we allow $\gamma=\infty$ in $L_W^{(\gamma)}$, with the following definition
		$$
		L_W^{(\infty)}(\theta,\sigma,s)=\E\left[\tfrac{ W^s}{(1+W)^{\sigma+s}}\right]=:L_W^{(\infty)}(\sigma,s),
		$$
        for which
        \begin{equation}\label{added}
        L_W^{(\infty)}(\sigma,s)=L_W^{(\infty)}(\sigma,s+1)+L_W^{(\infty)}(\sigma+1,s)
        \end{equation}
		holds for $s,\sigma \geq 0$.
		
		Note that for $W\sim \mathrm{B}_{II}(a,b)$ we have
		\begin{equation}\label{lwb}
			L_W^{(\infty)}(\sigma,s)=\tfrac{B(a+s,b+\sigma)}{B(a,b)}, \, \sigma,\, s\geq 0.
		\end{equation}
		
		Here is the analogue of the  independence property from Theorem \ref{SU22} in the boundary case $\beta=\infty$.

		\begin{theorem}\label{infin}
			Let $|\lambda|<a<b$, $\alpha>0$. Assume that random variables $X$ and $Y$ satisfy
			\begin{equation}\label{XY1}
				(X,Y)\sim\mathrm{GB}_{II}(\lambda,a,b;\alpha)\otimes \mathrm{B}_{II}(b-a,a+\lambda).
			\end{equation}
			Then, $(U,V)=F^{(\alpha,\infty)}(X,Y)$ satisfies
			\begin{equation}\label{UV1}
				(U,V)\sim\mathrm{GB}_{II}(-\lambda,a,b;\alpha)\otimes \mathrm{B}_{II}(b-a,a-\lambda).
			\end{equation}
		\end{theorem}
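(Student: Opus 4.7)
The plan is to mirror the $L^{(\gamma)}$-based argument used for Theorem \ref{SU22}, adapted to the boundary case where the second coordinate is simply of second kind beta type. The first step is a purely algebraic check: for $(u,v)=F^{(\alpha,\infty)}(x,y)$, computing $1+u$, $1+\alpha u$, $1+v$ as factored rational functions of $(x,y)$ yields the three identities
\[
\frac{uv}{(1+u)(1+v)}=\frac{xy}{(1+x)(1+y)},\quad \frac{1}{1+u}=\frac{\alpha x}{(1+\alpha x)(1+y)},\quad \frac{\alpha u}{(1+\alpha u)(1+v)}=\frac{1}{1+x},
\]
which are the natural $\beta\to\infty$ limits of \eqref{xyuv} (the second arising from $\beta v/(1+\beta v)\to 1$).

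Second, since the joint law of $(U,V)$ is determined by the mixed moments built from $U/(1+U)$, $\alpha U/(1+\alpha U)$, $1/(1+U)$ on the one side and $V/(1+V)$, $1/(1+V)$ on the other, it suffices to show that for every $(s,\theta,\sigma)\in\Xi$,
\[
\E\left[\left(\tfrac{UV}{(1+U)(1+V)}\right)^{s}\left(\tfrac{1}{1+U}\right)^{\theta}\left(\tfrac{\alpha U}{(1+\alpha U)(1+V)}\right)^{\sigma}\right]=L_U^{(\alpha)}(s,\sigma,\theta)\,L_V^{(\infty)}(s,\sigma).
\]
Applying the three algebraic identities above together with independence of $X$ and $Y$, the left-hand side factors as $L_X^{(\alpha)}(s,\theta,\sigma)\,L_Y^{(\infty)}(s,\theta)$. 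Thus the whole assertion reduces to the Laplace-type identity
\[
L_X^{(\alpha)}(s,\theta,\sigma)\,L_Y^{(\infty)}(s,\theta)=L_U^{(\alpha)}(s,\sigma,\theta)\,L_V^{(\infty)}(s,\sigma).
\]

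Finally, the ratio $L_X^{(\alpha)}(s,\theta,\sigma)/L_U^{(\alpha)}(s,\sigma,\theta)$ is exactly the Pochhammer-symbol expression already computed in \eqref{LX/LU} (with $\gamma=\alpha$) during the proof of Theorem \ref{SU22}. For the remaining factor, using \eqref{lwb} with $Y\sim\mathrm{B}_{II}(b-a,a+\lambda)$ and $V\sim\mathrm{B}_{II}(b-a,a-\lambda)$ expresses $L_V^{(\infty)}(s,\sigma)/L_Y^{(\infty)}(s,\theta)$ as a quotient of four beta functions; rewriting each via $B(c,d)=\Gamma(c)\Gamma(d)/\Gamma(c+d)$ and applying $\Gamma(c+d)/\Gamma(c)=(c)^{(d)}$, everything collapses to the reciprocal of the expression in \eqref{LX/LU}. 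Cross-multiplying yields the desired identity. I do not expect a serious obstacle here: the pleasant feature of the boundary case is that the nontrivial hypergeometric step (the Euler identity \eqref{EI}) used for Theorem \ref{SU22} disappears, since the transforms of the $\mathrm{B}_{II}$ factors are already plain gamma-function ratios. The only real care needed is the parameter bookkeeping to line up the $\mathrm{B}_{II}$ shape parameters of $Y$ and $V$ against the four Pochhammer symbols appearing in \eqref{LX/LU}.
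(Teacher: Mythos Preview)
Your outline is exactly the paper's argument: the three algebraic identities are the paper's \eqref{xyuv1}, the reduction to $L_X^{(\alpha)}(s,\theta,\sigma)\,L_Y^{(\infty)}(s,\theta)=L_U^{(\alpha)}(s,\sigma,\theta)\,L_V^{(\infty)}(s,\sigma)$ is \eqref{Lindep1}, and the verification via \eqref{LX/LU} together with \eqref{lwb} is precisely how the paper finishes. One wording slip: the ratio $L_V^{(\infty)}(s,\sigma)/L_Y^{(\infty)}(s,\theta)$ collapses to the \emph{same} Pochhammer expression as in \eqref{LX/LU}, not its reciprocal --- that is what makes the cross-multiplication yield \eqref{Lindep1} rather than its inverse.
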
  
	 
		The above result is equivalent to a theorem on $\mathrm{GB}_I$ and $\mathrm B_I$ distribution from \cite{KouVal2012} - see Theorem \ref{AKPV} and Remark \ref{connect} below. The  main result of this section gives its converse. 
        
		\begin{theorem}\label{charac1}
			Assume that $X$ and $Y$ are non-Dirac, positive random variables, which are independent. For $\alpha>0$ set $(U,V)=F^{(\alpha,\infty)}(X,Y)$.
			
			If $U$ and $V$ are independent, then there exist constants $a,b,\lambda$ with $b>a>|\lambda|$ such that \eqref{XY1} (and consequently \eqref{UV1}) hold.
		\end{theorem}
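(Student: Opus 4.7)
The starting point is to translate independence of $(U,V)$ into a functional equation for the Laplace-type transforms of Section \ref{LTR}. By the same computation as in the proof of Theorem \ref{infin}, the relations \eqref{xyuv1} together with the assumed independence of $X$ and $Y$ give
\[
\E\!\left[\left(\tfrac{UV}{(1+U)(1+V)}\right)^{s}\!\left(\tfrac{\alpha U}{(1+\alpha U)(1+V)}\right)^{\sigma}\!\left(\tfrac{1}{1+U}\right)^{\theta}\right]=L_X^{(\alpha)}(s,\theta,\sigma)\,L_Y^{(\infty)}(s,\theta),
\]
and independence of $(U,V)$ forces this expression to factor as $L_U^{(\alpha)}(s,\sigma,\theta)\,L_V^{(\infty)}(s,\sigma)$. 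Thus \eqref{Lindep1} must hold for all admissible $(s,\theta,\sigma)$, now viewed as a constraint on the unknown distributions of $X$ and $Y$ (those of $U$ and $V$ being induced by $F^{(\alpha,\infty)}$).

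The next step is to eliminate the $(\infty)$-transforms. Setting $\sigma=0$ (resp.\ $\theta=0$) in \eqref{Lindep1} expresses $L_Y^{(\infty)}(s,\theta)$ (resp.\ $L_V^{(\infty)}(s,\sigma)$) as a quotient of the $(\alpha)$-transforms of $X$ and $U$, while the boundary case $(s,0,0)$ yields $L_Y^{(\infty)}(s,0)/L_V^{(\infty)}(s,0)=L_U^{(\alpha)}(s,0,0)/L_X^{(\alpha)}(s,0,0)$. Substituting back reduces the problem to the closed equation
\[
\frac{L_X^{(\alpha)}(s,\theta,\sigma)\,L_X^{(\alpha)}(s,0,0)}{L_X^{(\alpha)}(s,\theta,0)\,L_X^{(\alpha)}(s,0,\sigma)}=\frac{L_U^{(\alpha)}(s,\sigma,\theta)\,L_U^{(\alpha)}(s,0,0)}{L_U^{(\alpha)}(s,0,\theta)\,L_U^{(\alpha)}(s,\sigma,0)},
\]
an identity between normalized $(\theta,\sigma)$-profiles of $L_X^{(\alpha)}$ and $L_U^{(\alpha)}$ with the roles of $\theta$ and $\sigma$ interchanged across the two sides. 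A direct calculation using \eqref{LX/LU} verifies that the $\mathrm{GB}_{II}$ pair does satisfy this cross-ratio identity.

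From here, the plan is to extract a finite-parameter family of solutions by iterated application of the shift identities \eqref{id1} and \eqref{id2} (together with their $(\infty)$-variants) at integer arguments. Each application produces a linear relation among joint moments of the form $\E[X^{j+k}(1+X)^{-j-\ell}(1+\alpha X)^{-k}]$ with $j,k,\ell\in\mathbb N_0$, and similarly for $U$. Organizing these recursions inductively, one aims to show that once three normalized initial moments are fixed, the entire transform $L_X^{(\alpha)}$ is uniquely determined and matches the hypergeometric expression \eqref{LwB2} corresponding to $X\sim\mathrm{GB}_{II}(\lambda,a,b;\alpha)$ and $U\sim\mathrm{GB}_{II}(-\lambda,a,b;\alpha)$. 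The Euler identity \eqref{EI}, which collapses the ratio \eqref{nicer} in the forward direction, reappears here as the algebraic mechanism that guarantees compatibility of the moment recursions with this three-parameter ansatz. Substituting the identified transforms into the formulas from the previous paragraph produces Pochhammer quotients matching \eqref{lwb} with parameters $(b-a,a+\lambda)$ and $(b-a,a-\lambda)$, giving $Y\sim\mathrm{B}_{II}(b-a,a+\lambda)$ and $V\sim\mathrm{B}_{II}(b-a,a-\lambda)$; the strict inequality $b>a$ (stronger than $|\lambda|<\min\{a,b\}$ in Theorem \ref{SU22}) emerges from integrability at infinity in the $\gamma\to\infty$ limit, while non-degeneracy of the parameters is secured by the non-Dirac hypothesis.

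The main anticipated obstacle is the inductive reconstruction described in the third paragraph: although each individual shift is elementary, organising the recursions so that exactly three free parameters survive, while ruling out spurious non-hypergeometric solutions, is the delicate part. This is also why the authors treat the $\beta=\infty$ case separately as a rehearsal for the argument of Section \ref{proof}, where the simultaneous presence of two parameters $\alpha,\beta$ substantially increases the combinatorial bookkeeping.
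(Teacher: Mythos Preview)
Your setup is correct: independence of $(U,V)$ together with \eqref{xyuv1} does force \eqref{Lindep1}, and your elimination of $L_Y^{(\infty)}$ and $L_V^{(\infty)}$ via the boundary values $\sigma=0$, $\theta=0$ is algebraically valid, producing the cross-ratio identity you display. The problem is what comes next.

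Your third paragraph is not a proof but a programme, and the programme has a structural gap. The cross-ratio identity involves \emph{both} $L_X^{(\alpha)}$ and $L_U^{(\alpha)}$, and $U$ is a function of the pair $(X,Y)$, not of $X$ alone. So after your elimination you still have two unknown distributions encoded in the equation, not one, and no amount of iterating \eqref{id1}--\eqref{id2} on the $X$-side will close the system without bringing $Y$ back in. Your claim that ``once three normalized initial moments are fixed, the entire transform $L_X^{(\alpha)}$ is uniquely determined'' is therefore unsubstantiated: you have not exhibited any mechanism that forces a three-parameter rigidity, and the invocation of the Euler identity \eqref{EI} as ``the algebraic mechanism that guarantees compatibility'' is speculative --- that identity plays no role in the characterization direction.

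The paper proceeds in the opposite order and with a concrete device you are missing. Rather than eliminating $L_Y,L_V$, it manipulates \eqref{Lindep1} together with \eqref{id1}--\eqref{id2} (and the involutivity of $F^{(\alpha,\infty)}$) to produce a \emph{separated} equation
\[
\tfrac{L_Y(s,\theta)\,L_Y(s,\theta+1)}{L_Y(s+1,\theta)\,L_Y(s-1,\theta+1)}=\tfrac{L_V(s,\sigma)\,L_V(s,\sigma+1)}{L_V(s+1,\sigma)\,L_V(s-1,\sigma+1)},
\]
in which $X$ and $U$ have disappeared entirely. This yields a discrete Cauchy--Pexider equation whose solution forces $n_i(z)=q(z+a_i)$ to be affine with a common slope; the non-Dirac hypothesis rules out $q=0$, and one reads off $Y\sim\mathrm{B}_{II}(b-a,a+\lambda)$, $V\sim\mathrm{B}_{II}(b-a,a-\lambda)$. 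Only then are $X$ and $U$ identified, and not through transform recursions at all, but by writing down the Jacobian identity for densities and solving for $f_U$ (and hence $f_X$) directly. Your proposal never reaches a separated-variables stage and never invokes a density argument, so the decisive steps are absent.
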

		
		\begin{proof}[Proof of Theorem \ref{charac1}] 
			The independencies of $X$ and $Y$, and of $U$ and $V$, yield \begin{equation}\label{Lindep1}
				L_X^{(\alpha)}(\theta,\sigma,s)\,L_Y^{(\infty)}(\theta,s)=	L_U^{(\alpha)}(\sigma,\theta,s)\,L_V^{(\infty)}(\sigma,s).
			\end{equation}	
            Expand $L_Y^{(\infty)}(\theta,s)$ according to \eqref{added} and plug it in \eqref{Lindep1}. Next, expand only the factor multiplying $L_Y^{(\infty)}(\theta+1,s)$ according to \eqref{id1}. By equating it with the right-hand side of \eqref{Lindep1}, with $L_U^{(\alpha)}(\sigma,\theta,s)$ expanded according to \eqref{id2}, and deleting the term appearing at both sides, we see that 
			\begin{align*}
				&\tfrac1{\alpha}L_X^{(\alpha)}(\theta+1,\sigma+1,s-1)L_Y^{(\infty)}(\theta+1,s)+L_X^{(\alpha)}(\theta,\sigma,s)L_Y^{(\infty)}(\theta,s+1)\\
				&\qquad\qquad=L_U^{(\alpha)}(\sigma,\theta,s+1)L_V^{(\infty)}(\sigma,s),
			\end{align*}
			holding for $\theta,\sigma\geq 0$ and $s\ge 1$. In view of the fact that $F^{(\alpha,\infty)}$ is an involution, by symmetry
			\begin{align*}&\tfrac 1\alpha\,L_U^{(\alpha)}(\sigma+1,\theta+1,s-1)L_V^{(\infty)}(\sigma+1,s)+L_U^{(\alpha)}(\sigma,\theta,s)L_V^{(\infty)}(\sigma,s+1)\\
				&\qquad\qquad =L_X^{(\alpha)}(\theta,\sigma,s+1)L_Y^{(\infty)}(\theta,s).
			\end{align*}
			Cross-multiplying the above two equalities, and exploiting \eqref{Lindep1} both for $s$ and $s+1$, we obtain 
			\begin{multline*}
				L_X^{(\alpha)}(\theta,\sigma,s+1)L_Y^{(\infty)}(\theta,s)L_X^{(\alpha)}(\theta+1,\sigma+1,s-1)L_Y^{(\infty)}(\theta+1,s)\\=L_U^{(\alpha)}(\sigma,\theta,s+1)L_V^{(\infty)}(\sigma,s)L_U^{(\alpha)}(\sigma+1,\theta+1,s-1)L_V^{(\infty)}(\sigma+1,s).
			\end{multline*}
			Referring again to \eqref{Lindep1}, we arrive at
			\begin{equation}\label{sepvar}
				\tfrac{L_Y^{(\infty)}(\theta,s)\,L_Y^{(\infty)}(\theta+1,s)}{L_Y^{(\infty)}(\theta,s+1)\,L_Y^{(\infty)}(\theta+1,s-1)}=\tfrac{L_V^{(\infty)}(\sigma,s)\,L_V^{(\infty)}(\sigma+1,s)}{L_V^{(\infty)}(\sigma,s+1)\,L_V^{(\infty)}(\sigma+1,s-1)}.
			\end{equation}
			Setting
			$$
			R(\theta,\sigma,s)=\tfrac{L_Y^{(\infty)}(\theta,s)}{L_Y^{(\infty)}(\theta+1,s-1)}\,\tfrac{L_V^{(\infty)}(\sigma+1,s-1)}{L_V^{(\infty)}(\sigma,s)},
			$$
			equality \eqref{sepvar} implies 
			$$
R(\theta,\sigma,s+1)=R(\theta,\sigma,s)=R(\theta,\sigma,1)=\tfrac{R_V(\sigma)}{R_Y(\theta)},\quad s \in \mathbb N,
			$$
			where $R_Y(\theta)=\tfrac{L_Y^{(\infty)}(\theta+1,0)}{L_Y^{(\infty)}(\theta,1)}$ and $R_V(\sigma)=\tfrac{L_V^{(\infty)}(\sigma+1,0)}{L_V^{(\infty)}(\sigma,1)}$.
			Consequently,
			\begin{equation}\label{Mauro1}
				\tfrac{L_Y^{(\infty)}(\theta,s)}{L_V^{(\infty)}(\sigma,s)}=\tfrac{L_Y^{(\infty)}(\theta+1,s-1)}{L_V^{(\infty)}(\sigma+1,s-1)}\,\tfrac{R_V(\sigma)}{R_Y(\theta)}=\tfrac{L_Y^{(\infty)}(\theta+s,0)}{L_V^{(\infty)}(\sigma+s,0)} \,\tfrac{R_V(\sigma)\cdots R_V(\sigma+s-1)}{R_Y(\theta)\cdots R_Y(\theta+s-1)}.
			\end{equation}
			Introduce now $N_1$, $N_2$, $N_3$, $N_4$ as follows
            \begin{gather*}
            N_3(x):=\prod_{j=0}^{x-1}\,R_Y(j),\quad N_2(x):=\prod_{j=0}^{x-1}\,R_V(j),\quad x \in \mathbb N, \quad N_3(0)=N_2(0)=1,
            \\
              N_4(x):= \frac {N_2(x)}{L_V^{(\infty)}(x,0)},\quad  N_1(x):=\frac {N_3(x)}{L_Y^{(\infty)}(x,0)}, \quad x\in\mathbb N_0. 
            \end{gather*}
        
            Then \eqref{Mauro1} can be written as
			\begin{equation}\label{N1234}
				L_V^{(\infty)}(\sigma,s)=\tfrac{N_1(\theta+s)N_2(\sigma)}{N_3(\theta)N_4(\sigma+s)}\,L_Y^{(\infty)}(\theta,s).
			\end{equation}
			Let $n_i(k):=\tfrac{N_i(k+1)}{N_i(k)}$ for $k=0,1,\ldots$ and $i=1,2,3,4$. Equating the right-hand side of \eqref{N1234} for $\theta$ and $\theta+1$, and upon referring to \eqref{added} for $L_Y^{(\infty)}$, we obtain 
			\begin{equation}\label{n13**}
				(n_1(\theta+s)-n_3(\theta))L_Y^{(\infty)}(\theta,s)=n_1(s+\theta)L_Y^{(\infty)}(\theta,s+1).
			\end{equation}
			On the other hand, applying \eqref{N1234} to both sides of $L_V^{(\infty)}(\sigma,s+1)=L_V^{(\infty)}(\sigma,s)-L_V^{(\infty)}(\sigma+1,s)$,  after cancellations of the common factors, we arrive at
			\begin{equation}\label{n42**}
				(n_4(\sigma+s)-n_2(\sigma))L_Y^{(\infty)}(\theta,s)=n_1(s+\theta)L_Y^{(\infty)}(\theta,s+1).
			\end{equation}
			Equating the right-hand sides of \eqref{n13**} and \eqref{n42**}, we obtain for $s,\theta,\sigma\in\mathbb N_0$
			\begin{equation}\label{CPE}
			n_1(\theta+s)-n_3(\theta)=n_4(\sigma+s)-n_2(\sigma)=:f(s),
			\end{equation}
			where the last equality is due to the separation of variables. Referring to the well-known solution of the Cauchy-Pexider equation on $\mathbb N$ (see e.g. \cite{Acz1966}),
			we conclude that $n_i$, $i=1,2,3,4$ are affine functions with the same slope, say $q$. Notably, $q=0$ is impossible; in such case, substituting  $s=\theta=0$ in \eqref{N1234} would yield $L_V^{(\infty)}(\sigma,0)=a^\sigma$, implying $V\sim \delta_{\frac{1-a}{a}}$, which leads to a contradiction. Thus, there exist $q\neq 0$ and $a_i\in \mathbb R$ such that $a_1+a_2=a_3+a_4$, and
			$$
			n_i(s)=q(a_i+s),\quad i=1,2,3,4.
			$$
			In view of \eqref{N1234}, for $s=\sigma=0$ and for $s=\theta=0$, we obtain 
			$$
			L_Y^{(\infty)}(\theta,0)=\tfrac{a_3^{(\theta)}}{a_1^{(\theta)}}\quad\mbox{and}\quad L_V^{(\infty)}(\sigma,0)=\tfrac{a_2^{(\sigma)}}{a_4^{(\sigma)}},\quad \theta,\sigma=0,1,\ldots.
			$$
			Since $L_Y^{(\infty)}(\theta,0)=\E[(1+Y)^{-\theta}]$ and $L_V^{(\infty)}(\sigma,0)=\E[(1+V)^{-\sigma}]$ are in $(0,1)$, it follows that $a_1>a_3=:a+\lambda>0$ and $a_4>a_2=:a-\lambda>0$. Consequently, we conclude that $Y\sim\mathrm{B}_{II}(b-a,a+\lambda)$ and $V\sim\mathrm{B}_{II}(b-a,a-\lambda)$, where $b-a:=a_1-a_3=a_4-a_2>0$.
			
			Clearly, $Y$ and $V$ are absolutely continuous. Since $U=F_1^{(\alpha,\infty)}(X,Y)$ is a smooth function of independent $(X,Y)$, and $Y$ has a density, $U$ also has a density, say $f_U$. Similarly, we conclude that $X$ has a density, say $f_X$. Therefore, the independence assumptions yield\footnote{Here and below, the notation $f\propto g$, where $f$ and $g$ are two functions, means that $f=c\,g$ for some positive constant $c$.}
			\begin{align*}
				f_U(u)\tfrac{v^{b-a-1}}{(1+v)^{b-\lambda}}\propto |J(u,v)|\,f_X\left(\tfrac{1+ v+\alpha uv}{\alpha  u}\right)
				\left(\tfrac{uv(1+\alpha u)}{1+u+v+\alpha uv}\right)^{b-a-1}\left(1+\tfrac{uv(1+\alpha u)}{1+u+v+\alpha uv}\right)^{-b-\lambda},
			\end{align*}
			where 
			\[
			|J(u,v)| = \tfrac{(1+\alpha u)(1+v+\alpha uv)}{\alpha u(1+u+v+\alpha uv)}
			\]
			is the Jacobian determinant of $(F^{(\alpha,\infty)})^{-1}$. 
			This can be rewritten as
			\begin{equation}\label{fu}
				f_U(u)\propto \tfrac{u^{b-a-2}\,(1+\alpha u)^{b-a}\,(1+u+v+\alpha uv)^{a+\lambda}\,(1+v)^{b-\lambda}}{(1+u)^{b+\lambda}\,(1+v+\alpha uv)^{b+\lambda-1}}\,f_X\left(\tfrac{1+ v+\alpha uv}{\alpha  u}\right).
			\end{equation}
			Fix $\kappa>0$. Then, for every $u>(\kappa\alpha)^{-1}$ there exists $v>0$ such that $\tfrac{1+ v+\alpha uv}{\alpha  u}=\kappa$. Moreover, $1+v\propto\tfrac{u}{1+\alpha u}$ and $1+u+v+\alpha uv\propto u$. For such $v$, after simple algebra,  equality \eqref{fu} gives
			$$
			f_U(u)\propto \tfrac{u^{b-\lambda-1}}{(1+u)^{b+\lambda}(1+\alpha u)^{a-\lambda}}, \quad u>(\kappa\alpha)^{-1}.
			$$
			Since $\kappa$ can be taken arbitrarily large, we conclude that $U\sim\mathrm{GB}_{II}(-\lambda,a,b;\alpha)$. Finally, $f_X$ can be computed from \eqref{fu} yielding  $X\sim\mathrm{GB}_{II}(\lambda,a,b;\alpha)$.
		\end{proof}
		
		\subsection{Discussion of known results} 
        
    Theorem \ref{AKPV} below is due to Koudou and Vallois,  \cite{KouVal2012}, and  Remark  \ref{connect} will prove that our Theorem \ref{infin}  is equivalent to it.  To state it, as in \cite{KouVal2012}, we denote by  
		$\mathrm{GB}_{I}(p,q,r;\delta)$, $p,q>0$, $r\in\mathbb R$, $\delta>0$, a generalized first kind beta distribution defined by the density
		$$
		f(x)\propto x^{p-1}(1-x)^{q-1}(1+(\delta-1)x)^r\,\mathbf 1_{(0,1)}(x),
		$$
		and by $\mathrm{B}_I(p,q)$  the standard first kind beta distribution, obtained by taking $\delta=1$ or $r=0$ in $\mathrm{GB}_{I}(p,q,r;\delta)$. The independence property reads:
		
		\begin{theorem}[\cite{KouVal2012}]\label{AKPV}   
			Let \begin{equation}\label{x'y'}
				(X',Y')\sim \mathrm{GB}_{I}(a+b,c,-b-c;\delta)\otimes \mathrm{B}_I(a,b)
			\end{equation}
			for $a,b,c>0$, and let $G^{(\delta)}\colon(0,1)^2\to(0,1)^2$ be defined by
			\begin{equation}\label{Gdel}
				G^{(\delta)}(x,y)=\left(\tfrac{1-xy}{1+(\delta-1)xy},\,\tfrac{(1-x)(1+(\delta-1)xy)}{(1+(\delta-1)x)(1-xy)}\right).
			\end{equation}
			If $(U',V')=G^{(\delta)}(X',Y')$, then
			\begin{equation}\label{u'v'}
				(U',V')\sim \mathrm{GB}_{I}(b+c,a,-a-b;\delta)\otimes \mathrm{B}_I(c,b).
			\end{equation}
		\end{theorem}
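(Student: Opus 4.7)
The approach is to reduce Theorem \ref{AKPV} to Theorem \ref{infin} by means of an explicit change of variables that intertwines the involution $G^{(\delta)}$ on $(0,1)^2$ with $F^{(\alpha,\infty)}$ on $(0,\infty)^2$. Concretely, I would introduce $\Phi\colon(0,\infty)^2\to(0,1)^2$ defined by
\[
\Phi(x,y) = \left(\tfrac{\alpha x}{1+\alpha x},\; \tfrac{1}{1+y}\right),\qquad \alpha := \tfrac{1}{\delta},
\]
and set $(X',Y'):=\Phi(X,Y)$ and $(U',V'):=\Phi(U,V)$, where $(U,V)=F^{(\alpha,\infty)}(X,Y)$. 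The asymmetry between the two coordinates of $\Phi$ mirrors the asymmetric roles of $X$ and $Y$ in $F^{(\alpha,\infty)}$.

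The first step is to check that $\Phi$ transports laws correctly. With the parameter dictionary
\[
\lambda := \tfrac{a-c}{2},\qquad \tilde a := \tfrac{a+c}{2},\qquad \tilde b := \tilde a + b,
\]
a routine Jacobian calculation will show that $X\sim\mathrm{GB}_{II}(\lambda,\tilde a,\tilde b;\alpha)$ pushes forward under $x\mapsto \alpha x/(1+\alpha x)$ to $\mathrm{GB}_I(a+b,c,-b-c;\delta)$, and $Y\sim\mathrm{B}_{II}(\tilde b-\tilde a,\tilde a+\lambda)$ pushes forward under $y\mapsto 1/(1+y)$ to $\mathrm{B}_I(a,b)$. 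The hypotheses $a,b,c>0$ in Theorem \ref{AKPV} correspond exactly to $|\lambda|<\tilde a<\tilde b$, the domain of validity of Theorem \ref{infin}. Applying the same dictionary with $\lambda$ replaced by $-\lambda$ (which swaps the roles of $a$ and $c$), the output laws $\mathrm{GB}_{II}(-\lambda,\tilde a,\tilde b;\alpha)$ and $\mathrm{B}_{II}(\tilde b-\tilde a,\tilde a-\lambda)$ that Theorem \ref{infin} promises for $(U,V)$ translate into the $\mathrm{GB}_I(b+c,a,-a-b;\delta)$ and $\mathrm{B}_I(c,b)$ of \eqref{u'v'}.

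The crucial algebraic step is the conjugacy
\[
\Phi\circ F^{(\alpha,\infty)} = G^{(1/\alpha)}\circ \Phi,
\]
i.e., $(U',V')=G^{(\delta)}(X',Y')$. I would verify this by plugging $\Phi(X,Y)$ into the definition \eqref{Gdel} of $G^{(1/\alpha)}$ and simplifying using the transparent relations $1+\alpha X+Y+\alpha XY=(1+\alpha X)(1+Y)$ and $1+X+Y+\alpha XY=X(1+\alpha U)$, both immediate from the definition \eqref{Finf} of $F^{(\alpha,\infty)}$, together with the analogous identity $X(1+\alpha U)(1+V)=(1+X)(1+Y+\alpha XY)$ needed for the second coordinate.

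Granted Steps 1 and 2, Theorem \ref{infin} applied to $(X,Y)$ yields independent $(U,V)$ with the stated second-kind beta laws, and pushing forward through $\Phi$ gives independence of $(U',V')$ together with the first-kind beta laws claimed by Theorem \ref{AKPV}. The main obstacle is identifying the correct form of $\Phi$: guessing that the $X$-coordinate should use the twist $\alpha x/(1+\alpha x)$ while the $Y$-coordinate uses $1/(1+y)$, and that the parameter correspondence is the inversion $\delta=1/\alpha$ rather than $\delta=\alpha$, is where the real insight lies. Once $\Phi$ is in hand, the algebra in Step 2 and the density transformations in Step 1 are routine.
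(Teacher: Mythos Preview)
Your proposal is correct and is essentially the paper's own argument: the paper does not prove Theorem~\ref{AKPV} directly but rather states in Remark~\ref{connect} that it is equivalent to Theorem~\ref{infin} via exactly the change of variables you describe. Your map $\Phi(x,y)=(\alpha x/(1+\alpha x),\,1/(1+y))$ with $\alpha=1/\delta$ coincides with the paper's $(g(x/\delta),g(1/y))$, your conjugacy $\Phi\circ F^{(\alpha,\infty)}=G^{(1/\alpha)}\circ\Phi$ is the content of \eqref{FGcon}, and your parameter dictionary $\lambda=(a-c)/2$, $\tilde a=(a+c)/2$, $\tilde b=\tilde a+b$ reproduces the distributional correspondences in items~(b) and~(c) of that remark.
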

		
		\begin{remark}\label{connect} 
			Theorem \ref{AKPV} and  Theorem \ref{infin} are equivalent.  Indeed, this follows from the following three elementary observations:
			\begin{enumerate}
				\item[(a)] Let $h(x)=\tfrac x{1-x}$, which maps bijectively $(0,1)$ onto $(0,\infty)$. Let $g$ be its inverse, i.e., $g(y)=\tfrac y{1+y}$ for $y\in(0,\infty)$. Finally write $G^{(\delta)}=(G^{(\delta)}_1,\,G^{(\delta)}_2)$.
				Then 
				\begin{equation}\label{FGcon}F^{(\delta^{-1},\infty)}(x,y)=\left(\delta h\left(G_1^{(\delta)}\left(g(\tfrac{x}{\delta}),\,g(\tfrac1y)\right)\right),\,\tfrac 1{h\left(G_2^{(\delta)}\left(g(\frac x{\delta}),\,g(\frac1y)\right)\right)}\right).\end{equation}
				\item[(b)] If, for $|\lambda|<a<b$,
				$$
				(X,Y)\sim \mathrm{GB}_{II}(\lambda,a,b;\delta^{-1})\otimes \mathrm{B}_{II}(b-a,a+\lambda)
				$$ 
				then 
				$$
				(X',Y')=(g(\tfrac X{\delta}),\,g(\tfrac1Y))\sim \mathrm{GB}_{I}(b+\lambda,a-\lambda,\lambda-b;\delta)\otimes \mathrm{B}_I(a+\lambda,b-a).
				$$ 
				\item[(c)] If 
				$$
				(U',V')
                \sim \mathrm{GB}_{I}(b-\lambda,a+\lambda,-b-\lambda;\delta)\otimes\mathrm{B}_{I}(a-\lambda,b-a)
				$$
				then
				$$
				(U,V)=\left(\delta h(U'), \tfrac 1{h(V')}\right)\sim \mathrm{GB}_{II}(-\lambda,a,b;\delta^{-1})\otimes \mathrm{B}_{II}(b-a,a-\lambda).
				$$
			\end{enumerate}
		\end{remark}
		\begin{remark}\label{DR} In the special case $\delta=1$ a converse to Theorem \ref{AKPV} is known, see \cite{DarRat1971} for the case with densities and \cite{SesWes2003} for the general case. The latter reads: if $X'$ and $Y'$ are independent, $(0,1)$-valued non-Dirac random variables and $$(U',V')=G^{(1)}(X',Y')=\left(1-X'Y',\,\tfrac{1-X'}{1-X'Y'}\right)$$ are independent, then there exist positive constants $a,b,c$ such that 
			\begin{align*}
				(X',Y')&\sim\mathrm{B}_I(a+b,c)\otimes \mathrm{B}_I(a,b)
				\intertext{and}
				(U',V')&\sim \mathrm{B}_I(b+c,a)\otimes\mathrm{B}_I(c,b).   
			\end{align*}
		\end{remark}

		We note a certain analogy between: 
		\begin{itemize} \item Theorem \ref{charac1}, which is a boundary case of our main Theorem \ref{charac}, and the proof of the former
		\end{itemize}
		and 
		\begin{itemize}
			\item the characterization of the gamma and GIG (generalized inverse Gaussian) laws by the Matsumoto-Yor independence in \cite{LetWes2000}, which is a boundary case of the characterization of the GIG laws by the generalized Matsumoto-Yor property given in \cite{LetWes2024} (for which the proof of \cite{LetWes2000} serves as a  warm-up). 
		\end{itemize}
		
		In view of the connection established in \eqref{FGcon} between $F^{(1/\delta,\infty)}$, as defined in \eqref{Finf}, and $G^{(\delta)}$, as defined in \eqref{Gdel}, we have the following direct consequence of Theorem \ref{charac1}.
		\begin{corollary}
			Assume that $X'$ and $Y'$ are $(0,1)$-valued independent, non-Dirac random variables. Let $(U',V')=G^{(\delta)}(X',Y')$ for some $\delta>0$. If $U'$ and $V'$ are independent, then there exist positive $a,b,c$ such that \eqref{x'y'} and \eqref{u'v'} hold.
		\end{corollary}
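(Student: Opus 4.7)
The plan is to deduce the corollary from Theorem \ref{charac1} by exploiting the explicit conjugation between $G^{(\delta)}$ and $F^{(1/\delta,\infty)}$ recorded in Remark \ref{connect}. Concretely, I would define the auxiliary variables $\tilde X = \delta h(X)$ and $\tilde Y = 1/h(Y)$, where $h(t) = t/(1-t)$ is the bijection from $(0,1)$ onto $(0,\infty)$ appearing in Remark \ref{connect}(a). Since $h$ is a diffeomorphism, $\tilde X$ and $\tilde Y$ are non-Dirac positive random variables, and as functions of $X$ and $Y$ separately, they remain independent. Setting $\tilde U = \delta h(U)$ and $\tilde V = 1/h(V)$, the identity \eqref{FGcon} gives $(\tilde U,\tilde V) = F^{(1/\delta,\infty)}(\tilde X,\tilde Y)$, and the hypothesis that $U$ and $V$ are independent transfers to the independence of $\tilde U$ and $\tilde V$.

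Next, I would apply Theorem \ref{charac1} to the pair $(\tilde X,\tilde Y)$ with parameter $\alpha = 1/\delta$: this produces constants $a_0,b_0,\lambda_0$ with $b_0 > a_0 > |\lambda_0| > 0$ such that
\[
(\tilde X,\tilde Y) \sim \mathrm{GB}_{II}(\lambda_0,a_0,b_0;1/\delta) \otimes \mathrm{B}_{II}(b_0 - a_0,\, a_0 + \lambda_0).
\]
Part (b) of Remark \ref{connect} then converts this into a distributional identity for $(X,Y) = (g(\tilde X/\delta),g(1/\tilde Y))$, namely $\mathrm{GB}_I(b_0+\lambda_0,\, a_0-\lambda_0,\, \lambda_0 - b_0;\,\delta) \otimes \mathrm{B}_I(a_0+\lambda_0,\, b_0 - a_0)$.

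Finally, I would match parameters by setting $a = a_0 + \lambda_0$, $b = b_0 - a_0$, and $c = a_0 - \lambda_0$; these three quantities are strictly positive thanks to $b_0 > a_0 > |\lambda_0|$, and they satisfy $a+b = b_0 + \lambda_0$ together with $b + c = b_0 - \lambda_0$, so the displayed law for $(X,Y)$ is precisely \eqref{x'y'}. The second conclusion \eqref{u'v'} is then immediate, either from Theorem \ref{AKPV} applied directly to $(X,Y)$, or equivalently by translating the dual conclusion of Theorem \ref{charac1} for $(\tilde U,\tilde V)$ through part (c) of Remark \ref{connect}. I do not anticipate any substantive obstacle here: the entire corollary is a bookkeeping exercise in the change of variables of Remark \ref{connect}, with all the analytic content already absorbed into Theorem \ref{charac1}.
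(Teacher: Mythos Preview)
Your proposal is correct and follows exactly the route the paper intends: the corollary is stated there as a direct consequence of Theorem \ref{charac1} via the conjugation \eqref{FGcon} in Remark \ref{connect}, and you have simply spelled out that change of variables and the parameter matching. There is nothing to add.
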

		Note that this result was announced in Theorem 2.11 of \cite{KouVal2012},  which is an ``if and only if'' statement. However, no proof of the ``only if'' part was provided. Our Theorem \ref{charac1} fills the gap, in view of the equivalence between Theorems \ref{infin} and \ref{charac1}.

		\subsection{\texorpdfstring{The case $\beta=0$}{The case of beta=0}}

		To obtain a version of $F^{(\alpha,\beta)}$ for $\beta=0$, multiply the first coordinate of $F^{(\alpha,\beta)}$ by $\alpha$ and then take the reciprocal (by reciprocal of $x\neq 0$  we mean $1/x$). Further, multiply the second coordinate by $\beta$ and then take the reciprocal as well. Finally, set $\beta=0$. In this way one gets an involution  $F^{(\alpha,0)}\colon (0,\infty)^2\to(0,\infty)^2$ defined by
		\begin{equation}\label{F0}
			F^{(\alpha,0)}(x,y)=\left(\tfrac{1+x+y}{\alpha x y},\,\tfrac{1+x+y+\alpha x y}{\alpha x(1+x)}\right).
		\end{equation}

        \begin{remark}\label{equ}
The limiting cases $\beta=0$ and $\beta=\infty$ are, in fact, equivalent. Indeed, using the definitions of $F^{(\alpha,0)}$ in \eqref{F0} and $F^{(\alpha,\infty)}$ in \eqref{Finf}, we obtain
		$$
			F^{(\alpha,0)}(x,y)=\left(\tfrac 1\alpha F_1^{(\frac 1\alpha,\infty)}\left( \alpha x,\,\tfrac 1y\right),\left(F_2^{(\frac 1\alpha,\infty)}\left(\alpha x,\,\tfrac 1y\right)\right)^{-1}\right).
			$$
   \end{remark}
Thus, the map $F^{(\alpha,0)}$ is obtained from $F^{(\frac 1\alpha,\infty)}$ by the coordinate transformation
\[
(x,y)\mapsto \left(\alpha x,y^{-1}\right)
\]
and the corresponding inverse transformation on the image coordinates. So, if one of them is an IP map, then so is the other, and the corresponding independence-preserving laws can be transferred from one case to the other.

Consequently, by Theorems \ref{infin} and \ref{charac1}, we obtain the following fact.

\medskip

\begin{corollary}
Assume that $X$ and $Y$ are non-Dirac, positive random variables, which are independent. Then $(U,V)=F^{(\alpha,0)}(X,Y)$ are independent if and only if there exist parameters $a,b,\lambda$ satisfying $0<\lambda<\min\{a,b\}$ 
such that
\[
(X,Y)\sim
\mathrm{GB}_{II}(\lambda,a,b;\alpha)
\otimes
\mathrm{B}_{II}(b-\lambda,2\lambda).
\]
\end{corollary}

		\section{Proof of Theorem \ref{charac}}\label{proof}
        \subsection*{Overview of the proof}

The proof of Theorem \ref{charac} is rather long, so we begin by explaining its logic.
The argument has three main ideas.
First, the independence assumptions are translated into identities for the Laplace-type transforms $L_W^{(\gamma)}$
introduced in Section \ref{LTR}; from these identities we derive structural relations
for the auxiliary functions $M_W^{(\gamma)}$ defined in \eqref{defM}, which force a dichotomy.
Second, after excluding one of the two branches of this dichotomy, we show that the
remaining branch implies a factorization of the ratio $L_X^{(\alpha)}/L_U^{(\alpha)}$. This factorization
leads to discrete functional equations whose solutions are of Pochhammer    type, which allows to 
identify the parameters $a$, $b$ and $\lambda$, and finally determines the ratio $L_X^{(\alpha)}/L_U^{(\alpha)}$ explicitly. Third, the $L_U^{(\alpha)}$ is identified as a solution to hypergeometric difference equation. 

More precisely, the proof proceeds in the following steps.

\begin{enumerate}
    \item Subsection \ref{sec:difference}: 
    The starting point of the proof is identity \eqref{Lindep} for transforms $L_X^{(\alpha)},L_Y^{(\beta)},L_U^{(\alpha)},L_V^{(\beta)}$, which follows from the independence of $(X,Y)$ and $(U,V)=F^{(\alpha,\beta)}(X,Y)$ combined with the algebraic relations in \eqref{xyuv}. We introduce the difference-operator formalism and auxiliary functions $M_W^{(\gamma)}$ in \eqref{defM}.

\item Subsection \ref{sec:dichotomy}: We then use the difference identities satisfied by $L_W^{(\gamma)}$ to show that \eqref{Lindep} implies both the product relation \eqref{prod1} analogous to \eqref{Lindep} for the $M_W^{(\gamma)}$ and an additional linear relation \eqref{sum}. 
This leads to the dichotomy \eqref{MxMv}, \eqref{MM}: either $M_X^{(\alpha)}=M_U^{(\alpha)}$ or $\alpha' M_X^{(\alpha)}=\beta' M_V^{(\beta)}$ hold, where $\alpha'=\alpha-1$ and $\beta'=\beta-1$.

    \item Subsection \ref{sec:exclude}: Next, we prove that one branch of this dichotomy,
    \[
        \alpha' M_X^{(\alpha)}=\beta' M_V^{(\beta)},
    \]
    is impossible. The argument converts this identity into a distributional identity
    for sums of bounded random variables and then compares supports, which leads to a contradiction. Therefore the only
     possibility is
    \[
        M_X^{(\alpha)}=M_U^{(\alpha)}.
    \]

    \item Subsection \ref{sec:factorization}: Under the above identity, the ratio
    \[
        h_s(\theta,\sigma)=\frac{L_X^{(\alpha)}(\theta,\sigma,s)}{L_U^{(\alpha)}(\theta,\sigma,s)}
    \]
satisfies the multiplicative relation \eqref{4h}, which implies that $h_s$ can be written as a product of a function of $\theta$ and a function of $\sigma$.

\item Subsection \ref{sec:refined}: After comparing successive values of $s$, we refine the factorization obtained in Subsection \ref{sec:factorization} and show that $h_s$ admits the representation \eqref{LXLUN}, 
$$
h_s(\theta, \sigma)= \frac {N_1(\theta+s)N_2(\sigma)}{N_3(\theta)N_4(\sigma+s)}
$$
in terms of auxiliary functions $N_1,N_2,N_3,N_4$.

\item Subsection \ref{sec:Ni}: We then identify the $N_i$'s. The resulting discrete functional equations show that their successive quotients are affine with a common slope, so there exist parameters $a,b>0$ and $\lambda\in\mathbb R$ with $|\lambda|<\min\{a,b\}$ such that 
$$
L_X^{(\alpha)}(\theta,\sigma,s)=\frac {(b+\lambda)^{(\theta+s)}(a-\lambda)^{(\sigma)}}{(b-\lambda)^{(\sigma+s)}(a+\lambda)^{(\theta)}}L_U^{(\alpha)}(\theta,\sigma,s),
$$
see \eqref{final}. 

    \item Subsection \ref{sec:hypergeo}: After substituting the above form of $L_X^{(\alpha)}$ back into the transform identities, we obtain
    a hypergeometric difference equation (HDE) for the function
    $\theta\mapsto L_U^{(\alpha)}(\theta,0,0)$, or a related transform (see Lemma \ref{lem:HDEn}). General solutions to HDE are known and explicit. Requiring the positivity of the transforms, we conclude that
    \[
        U\sim \mathrm{GB}_{II}(-\lambda,a,b;\alpha).
    \]

    \item Subsection \ref{sec:completion}: Finally, the explicit ratio formula \eqref{final} gives
    $X\sim \mathrm{GB}_{II}(\lambda,a,b;\alpha)$. By symmetry, the same reasoning applies
    to $(Y,V)$, and a final comparison in \eqref{Lindep} shows that the same parameters
    $a,b,\lambda$ occur on both sides. This completes the proof of Theorem \ref{charac}.
\end{enumerate}
		\subsection{\texorpdfstring{Difference operators and $L_W^{(\gamma)}$}{Difference operators and LWgamma}}\label{sec:difference}
		In order to simplify the presentation it is convenient to introduce the difference operators acting on a function $f$ of two variables as follows: 
        \[
        \Delta_1 f(\theta,\sigma)=f(\theta+1,\sigma)-f(\theta,\sigma),\qquad \Delta_2 f(\theta,\sigma)=f(\theta,\sigma+1)-f(\theta,\sigma).
        \]
        Note that for two functions $g$ and $h$ of two variables, we have
		\begin{equation}\label{difprod}
			\Delta_k (gh)=\Delta_kg\,\Delta_kh+(\Delta_k g)\,h+g\,(\Delta_k h), \quad k=1,2,
		\end{equation}
which implies
		\begin{align}\label{gh}
			\Delta_1 (g\,h) \,\Delta_2 (g\,h)=g\,h\,(\Delta_1 g \,\Delta_2 h+\Delta_2 g \,\Delta_1 h)+\Delta_1 g\,\Delta_2 g \,\Delta_1 h\,\Delta_2 h\,(1+\Phi g+\Phi h)
			,
		\end{align}	
		where we define
        \[
        \Phi f=\tfrac{f\,(f+\Delta_1 f+\Delta_2 f)}{\Delta_1 f \,\Delta_2 f},
        \]
        provided the denominator is non-zero. 
        Indeed, writing $A=\Delta_1 g$, $B=\Delta_2 g$, $C=\Delta_1 h$ and $D=\Delta_2 h$, and using \eqref{difprod}, we get
\[
\Delta_1(gh)=Ah+gC+AC,\qquad \Delta_2(gh)=Bh+gD+BD.
\]
After expanding the product $\Delta_1 (g\,h) \,\Delta_2 (g\,h)$ and grouping terms, one readily singles out the expression
$gh(AD+BC)$, with the remaining terms that can be rewritten as 
\[
ABCD(1+\Phi g+\Phi h).
\]

		In the sequel, we apply the operators $\Delta_k$, $k=1,2$, to functions of three variables, with the additional argument written at the last place. Next for $s\in\R$, $\theta\ge -s$ and $\sigma\ge 0$, we can rephrase \eqref{id1} and \eqref{id2} as follows 
		\begin{equation}\label{W}
			\left\{\begin{array}{l}\Delta_1 L_W^{(\gamma)}(\theta,\sigma,s)=-\tfrac{1}{\gamma}L_W^{(\gamma)}(\theta+1,\sigma+1,s-1),\\
				\,\\ \Delta_2 L_W^{(\gamma)}(\theta,\sigma,s)=-L_W^{(\gamma)}(\theta,\sigma,s+1),\end{array}\right.
		\end{equation}
        implying
		\begin{align*}
			& L_W^{(\gamma)}(\theta,\sigma,s)+\Delta_1 L_W^{(\gamma)}(\theta,\sigma,s)+\Delta_2 L_W^{(\gamma)}(\theta,\sigma,s)\\
			&=L_W^{(\gamma)}(\theta,\sigma,s)-\tfrac{1}{\gamma}L_W^{(\gamma)}(\theta+1,\sigma+1,s-1)-L_W^{(\gamma)}(\theta,\sigma,s+1)\\
			&=\tfrac{1}{\gamma}\left(\gamma\left[L_W^{(\gamma)}(\theta,\sigma,s)-L_W^{(\gamma)}(\theta,\sigma,s+1)\right]-L_W^{(\gamma)}(\theta+1,\sigma+1,s-1)\right)\\&
			\stackrel{\eqref{id2}}{=}\tfrac{1}{\gamma}\left(\gamma L_W^{(\gamma)}(\theta,\sigma+1,s)-L_W^{(\gamma)}(\theta+1,\sigma+1,s)-L_W^{(\gamma)}(\theta+1,\sigma+2,s-1)\right)\\&
			\stackrel{\eqref{id1}}{=}\tfrac{1}{\gamma}\left(\gamma L_W^{(\gamma)}(\theta,\sigma+1,s)-L_W^{(\gamma)}(\theta+1,\sigma+1,s)
            -\gamma\left[L_W^{(\gamma)}(\theta,\sigma+1,s)-L_W^{(\gamma)}(\theta+1,\sigma+1,s)\right]\right)
            \\&=\tfrac{\gamma-1}{\gamma}\,L_W^{(\gamma)}(\theta+1,\sigma+1,s),
		\end{align*}
where in the fourth line we apply \eqref{id2} in two forms, namely
\[
L_W^{(\gamma)}(\theta,\sigma,s)-L_W^{(\gamma)}(\theta,\sigma,s+1)
=
L_W^{(\gamma)}(\theta,\sigma+1,s)
\]
and
\[
L_W^{(\gamma)}(\theta+1,\sigma+1,s-1)
=
L_W^{(\gamma)}(\theta+1,\sigma+1,s)+L_W^{(\gamma)}(\theta+1,\sigma+2,s-1),
\]
while in the fifth line we use \eqref{id1} with $\sigma$ replaced by $\sigma+1$, that is,
\[
L_W^{(\gamma)}(\theta+1,\sigma+2,s-1)
=
\gamma\bigl(L_W^{(\gamma)}(\theta,\sigma+1,s)-L_W^{(\gamma)}(\theta+1,\sigma+1,s)\bigr).
\]      
		From \eqref{W},
		\begin{equation}\label{D1D2L}
			\Delta_1 \Delta_2 L_W^{(\gamma)}(\theta,\sigma,s)=\tfrac{1}{\gamma}L_W^{(\gamma)}(\theta+1,\sigma+1,s),
		\end{equation}
		and thus
		\begin{equation}\label{ddL}
			L_W^{(\gamma)}+\Delta_1 L_W^{(\gamma)}+\Delta_2 L_W^{(\gamma)}=(\gamma-1)\Delta_1\Delta_2 L_W^{(\gamma)}.
		\end{equation}
		Consequently, the quantity 
		\begin{equation}\label{defM}
			M_W^{(\gamma)}(\theta,\sigma,s):=\frac{L_W^{(\gamma)}(\theta,\sigma,s)\,L_W^{(\gamma)}(\theta+1,\sigma+1,s)}{L_W^{(\gamma)}(\theta,\sigma,s+1)\,L_W^{(\gamma)}(\theta+1,\sigma+1,s-1)},
		\end{equation}
		thanks to \eqref{W} and \eqref{D1D2L}, is rewritten as 
		\begin{equation}\label{new}
			M_W^{(\gamma)}=\frac{L_W^{(\gamma)}\,\Delta_1\Delta_2 L_W^{(\gamma)}}{\Delta_2 L_W^{(\gamma)}\,\Delta_1 L_W^{(\gamma)}}
		\end{equation}
		and 
		\begin{equation}\label{FM}
			\Phi L_W^{(\gamma)}=(\gamma-1)M_W^{(\gamma)}.
		\end{equation}

		\subsection{\texorpdfstring{A dichotomy for the $M$-functions}{A dichotomy for the M-functions}}\label{sec:dichotomy}

		Note that \eqref{Lindep}, implies 
        \[
        				M_X^{(\alpha)}(\theta,\sigma,s)\,M_Y^{(\beta)}(\sigma,\theta,s)=	M_U^{(\alpha)}(\sigma,\theta,s)\,M_V^{(\beta)}(\theta,\sigma,s),\quad (\theta,\sigma,s)\in\Xi.
        \]
It will be convenient to work with simplified notation (note the swap of the arguments in $K_Y$ and $K_U$), for $K\in\{L,M\}$, 
		\[
		K_X(\theta,\sigma,s):=K_X^{(\alpha)}(\theta,\sigma,s),\quad K_Y(\theta,\sigma,s):=K_Y^{(\beta)}(\sigma,\theta,s),
		\]
		\[K_U(\theta,\sigma,s):=K_U^{(\alpha)}(\sigma,\theta,s),\quad K_V(\theta,\sigma,s):=K_V^{(\beta)}(\theta,\sigma,s).
		\]
Then,
		\begin{equation}\label{prod1}
			M_X\,M_Y=M_U\,M_V
		\end{equation}
		as functions defined on $\Xi$, see \eqref{setF}.  
		
		We will also prove that as functions defined on $\Xi$,
		\begin{equation}\label{sum}
			(\alpha-1)M_X+(\beta-1)M_Y=(\alpha-1)M_U+(\beta-1)M_V.
		\end{equation}
		
		In view of \eqref{W}, 
        we arrive at  
		\begin{align}
			&\left\{\begin{array}{l}\Delta_1 L_X(\theta,\sigma,s)=-\tfrac{1}{\alpha}L_X(\theta+1,\sigma+1,s-1),\\
				\Delta_2 L_X(\theta,\sigma,s)=-L_X(\theta,\sigma,s+1),\end{array}\right.\label{xxx}\\
			&\left\{\begin{array}{l}\Delta_1 L_Y(\theta,\sigma,s)=-L_Y(\theta,\sigma,s+1),\\
				\Delta_2 L_Y(\theta, \sigma,s)=-\tfrac{1}{\beta}L_Y(\theta+1,\sigma+1,s-1),\end{array}\right.\label{yyy}\\
			&\left\{\begin{array}{l}\Delta_1 L_U(\theta, \sigma,s)=-L_U(\theta, \sigma,s+1),\\
				\Delta_2 L_U(\theta,\sigma,s)=-\tfrac{1}{\alpha}L_U(\theta+1,\sigma+1,s-1),\end{array}\right.\label{uuu}\\
			&\left\{\begin{array}{l}\Delta_1 L_V(\theta,\sigma,s)=-\tfrac{1}{\beta}L_V(\theta+1,\sigma+1,s-1),\\
				\Delta_2 L_V(\theta,\sigma,s)=-L_V(\theta,\sigma,s+1),\end{array}\right.\label{vvv}
		\end{align}
        which hold for $(\theta,\sigma,s)\in\Xi$. 
		In view of \eqref{Lindep}, the above relations yield the following identities of functions defined on $\Xi$, namely $$\Delta_1 L_X\;\Delta_2 L_Y= \Delta_2 L_U\;\Delta_1 L_V\quad\mbox{and}\quad \Delta_2 L_X\;\Delta_1 L_Y= \Delta_1 L_U\;\Delta_2L_V,
		$$
		and, summing and multiplying the two equality relations side-by-side, 
		\begin{equation}\label{del1}
			\Delta_1 L_X\;\Delta_2 L_Y+\Delta_2 L_X\;\Delta_1L_Y=\Delta_1 L_U\;\Delta_2 L_V+\Delta_2 L_U\;\Delta_1 L_V
		\end{equation}
		and
		\begin{equation}\label{del2}
			\Delta_1 L_X\;\Delta_2 L_X\;\Delta_1 L_Y\;\Delta_2 L_Y=\Delta_1 L_U\;\Delta_2 L_U\;\Delta_1 L_V\;\Delta_2 L_V.
		\end{equation}
		Finally, \eqref{Lindep} implies $\Delta_{k}\left(L_X\,L_Y\right)=\Delta_{k}\left(L_U\,L_V\right)$, for $k=1,2$. 
		Therefore, in view of \eqref{gh}, applied to $(g,\,h)=\left(L_X,\,L_Y\right)$ and to $(g,\,h)=\left(L_U,\,L_V\right)$, and after cancellations made possible by \eqref{del1} and \eqref{del2}), we get the equality of functions defined on $\Xi$,
		\begin{equation*}
			\Phi L_X+\Phi L_Y=\Phi L_U+\Phi L_V.
		\end{equation*}
		Taking \eqref{FM} into account, we obtain \eqref{sum}.
		
		Via simple algebra from \eqref{prod1} and \eqref{sum} we get  on the set  $\Xi$,
		\begin{equation}\label{sym1}
			(\alpha' M_X-\beta' M_V)(M_X-M_U)=0
		\end{equation}
		and
		\begin{equation}\label{sym2}
			(\beta' M_Y-\alpha' M_U)(M_Y-M_V)=0
		\end{equation}
		with $\alpha'=\alpha-1$ and $\beta'=\beta-1$.

		We will address only the first of these two equations. The second can be handled in a similar manner. Equation \eqref{sym1} implies that either
		 \begin{equation}\label{MxMv}\alpha' M_X=\beta' M_V\qquad\mbox{on }\Xi \end{equation} 
         or
         \begin{equation}\label{MM}M_X=M_U\qquad\mbox{on }\Xi.\end{equation}
To justify this, we introduce the open set
\[
D=\{(\theta,\sigma,s)\in\mathbb C^3\colon \ \Re(\theta)>0,\ \Re(\sigma)>0, \, \Re(\theta+s)>0,\ \Re(\sigma+s)>0\}.
\]
For any fixed $\gamma>0$, the transform $L_W^{(\gamma)}$ extends holomorphically to $D$. Consequently, the functions $M_X$, $M_U$ and $M_V$, being quotients of products of such transforms, are meromorphic on $D$. Hence $F := \alpha' M_X-\beta' M_V$ and $G:=M_X-M_U$ are meromorphic on $D$ as well. 
By \eqref{sym1}, we have $FG=0$ on $D\cap\mathbb R^3$. Therefore, after multiplying by a common denominator, we obtain a holomorphic identity on $D$ which vanishes on the nonempty real open subset $D\cap\mathbb R^3$. Hence, by the identity theorem for holomorphic functions, this identity holds on all of $D$, and so $FG=0$ on $D$. Since the ring of meromorphic functions on a connected domain has no zero divisors (see the reasoning of Section 4.2 in \cite{LetWes2024}), we conclude that either $F=0$ on $D$ or $G=0$ on $D$. In other words, either \eqref{MxMv} or \eqref{MM} holds on $D$.
Finally, since $\Xi = \overline{D\cap\R^3}$, using continuity of the functions involved, we conclude that either \eqref{MxMv} or \eqref{MM} holds on $\Xi$.
		
		\subsection{\texorpdfstring{Excluding the branch $\alpha' M_X=\beta' M_V$}{Excluding the branch alpha MX=beta MV}}\label{sec:exclude}
		Observe that, since $\alpha\neq\beta$, the above relation implies $\alpha'\beta'\neq 0$ and denote  
		$$
		C(s):=\tfrac{L_X(1,1,s)\,L_V(0,0,s+1)}{L_X(0,0,s+1)\,L_V(1,1,s)}.
		$$
        
		Then, \eqref{MxMv} for $\theta=\sigma=0$ gives
		$$
		\alpha' C(s)=\beta' C(s-1),\quad s\in\mathbb N.
		$$
		Iterating with respect to $s=1,2,\ldots$, we see that 
		$$
		C(s)=\kappa^s\,C(0),\quad \mbox{where }\;\kappa:=\tfrac{\beta'}{\alpha'}\neq 1\mbox{ and }\kappa>0.
		$$
		Thus
		$$
		\tfrac{L_X(1,1,s)\,L_V(0,0,s+1)}{L_X(0,0,s+1)\,L_V(1,1,s)}=\,\kappa^s\,\tfrac{L_X(1,1,0)\,L_V(0,0,1)}{L_X(0,0,1)\,L_V(1,1,0)},
		$$
		which can be rewritten as 
		$$
		\tfrac{L_X(1,1,s)}{L_X(1,1,0)}\,\tfrac{L_V(0,0,s+1)}{L_V(0,0,1)}=\kappa^s\,\tfrac{L_X(0,0,s+1)}{L_X(0,0,1)}\,\tfrac{{\,L_V(1,1,s)}}{\,L_V(1,1,0)}.
		$$
		Then for mutually independent $X_0, X_1, V_0, V_1$, with
		\[
		\widetilde X_i\sim \frac{\tfrac{\alpha^i\,x}{(1+x)(1+\alpha x)^i}\P_X(dx)}{L_X(i,i,1-i)} \quad\mbox{and}\quad \widetilde V_i\sim \frac{\tfrac{\beta^i\,v}{(1+v)(1+\beta v)^i}\,\P_V(dv)}{L_V(i,i,1-i)}, \quad i=0,1,
		\]
		with
		\[
		X_i:=\log\,\tfrac{\widetilde X_i}{1+\widetilde X_i}\quad\mbox{and}\quad V_i:=\log\,\tfrac{\widetilde V_i}{1+\widetilde V_i},\quad i=0,1,
		\]
		we get
		\begin{equation}\label{???}
			\E\left[e^{s(X_1+V_0)}\right]=\E\left[e^{s \left(c+X_0+V_1\right)}\right],\quad s=0,1,2,\ldots,
		\end{equation}
		where  $c=\log\,\kappa\neq 0$.
		
		Note that \eqref{???} implies $X_1+V_0\stackrel{d}{=}c+X_0+V_1$. Indeed, $e^{X_i}$ and $e^{V_i}$, $i=0,1$, are $[0,1]$-valued random variables, so all distributions are determined by their moments. Consequently,  \begin{equation}\label{supp}
			\mathrm{supp}(\P_{X_1}*\P_{V_0})
			=c+\mathrm{supp}(\P_{X_0}*\P_{V_1}),
		\end{equation}
		which turns out to be impossible. Indeed, first note that $$\mathrm{supp}(\P_{X_0}*\P_{V_1})=\mathrm{supp}(\P_{X_1}*\P_{V_0}),$$
		since $$\mathrm{supp}(\P_{\widetilde X_0})=\mathrm{supp}(\P_{\widetilde X_1})=\mathrm{supp}(\P_X)\subset[0,\infty)$$ and $$\mathrm{supp}(\P_{\widetilde V_0})=\mathrm{supp}(\P_{\widetilde V_1})=\mathrm{supp}(\P_V)\subset[0,\infty).$$  
		By taking into account how the transformation $\log\tfrac{x}{1+x}$ acts on $\mathrm{supp}(\P_X)$ and $\mathrm{supp}(\P_V)$, we conclude that the supremum $u$ of  $\mathrm{supp}(\P_{X_0}*\P_{V_1})=\mathrm{supp}(\P_{X_1}*\P_{V_0})$ is a finite number. Thus, \eqref{supp} would imply $u=c+u$, which is impossible, since necessarily $c\neq 0$.
		
		\subsection{\texorpdfstring{Factorization of the ratio $L_X/L_U$}{Factorization of the ratio LX/LU}}\label{sec:factorization}
		\begin{lemma}\label{lem:hAB}
			Let $\mathbb N_0^2\ni(\theta,\sigma) \mapsto h(\theta,\sigma)\in (0,\infty)$. Then, there exist two functions $A$ and $B$ defined on $\mathbb{N}_0$ such that $h(\theta,\sigma)=A(\theta)B(\sigma)$ if and only if, for all $(\theta,\sigma)$, we have \begin{equation}\label{4h} 
				h(\theta,\sigma)h(\theta+1,\sigma+1)=h(\theta+1,\sigma)h(\theta,\sigma+1).
			\end{equation}
		\end{lemma}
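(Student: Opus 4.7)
The forward implication is immediate: if $h(\theta,\sigma)=A(\theta)B(\sigma)$, then both sides of \eqref{4h} expand to $A(\theta)A(\theta+1)B(\sigma)B(\sigma+1)$, so the identity is automatic.

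For the converse, my plan is to take logarithms and reinterpret \eqref{4h} as a vanishing mixed difference. Setting $g(\theta,\sigma)=\log h(\theta,\sigma)$, which is well defined since $h$ is strictly positive, \eqref{4h} becomes
\[
g(\theta,\sigma)+g(\theta+1,\sigma+1)=g(\theta+1,\sigma)+g(\theta,\sigma+1),
\]
i.e. $\Delta_\theta \Delta_\sigma g\equiv 0$ on $\mathbb{N}_0^2$. The natural candidates are then $A(\theta):=h(\theta,0)/h(0,0)$ and $B(\sigma):=h(0,\sigma)$, and the claim reduces to showing that
\begin{equation}\label{toshow}
h(\theta,\sigma)\,h(0,0)=h(\theta,0)\,h(0,\sigma) \qquad \text{for all } (\theta,\sigma)\in\mathbb{N}_0^2.
\end{equation}

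The key step is to prove \eqref{toshow} by a straightforward double induction on $\theta$ and $\sigma$. The base cases $\theta=0$ or $\sigma=0$ are tautological. For the inductive step, suppose \eqref{toshow} holds at $(\theta,\sigma)$, $(\theta+1,\sigma)$ and $(\theta,\sigma+1)$; then the functional equation \eqref{4h} gives
\[
h(\theta+1,\sigma+1)=\frac{h(\theta+1,\sigma)\,h(\theta,\sigma+1)}{h(\theta,\sigma)},
\]
and substituting the three inductive expressions yields, after cancellation of one factor of $h(0,0)$, the identity \eqref{toshow} at $(\theta+1,\sigma+1)$. Setting $A(\theta)=h(\theta,0)/h(0,0)$ and $B(\sigma)=h(0,\sigma)$ then gives the desired factorization; both are strictly positive since $h$ is.

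This is a routine verification and I do not expect any obstacle. (The only minor point worth flagging is the domain discrepancy in the statement, which seems to say $A,B$ are defined on $\mathbb{N}$ while $h$ is defined on $\mathbb{N}_0^2$; the proof naturally produces $A$ and $B$ on $\mathbb{N}_0$, which is presumably the intended reading.)
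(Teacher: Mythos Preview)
Your proof is correct and follows essentially the same approach as the paper: both arrive at the factorization with $A(\theta)=h(\theta,0)/h(0,0)$ and $B(\sigma)=h(0,\sigma)$. The paper's version is marginally slicker in that it observes directly from \eqref{4h} that the ratio $h(\theta+1,\sigma)/h(\theta,\sigma)$ is independent of $\sigma$ and then telescopes, thereby avoiding the explicit double induction, but the content is identical (and your remark on the $\mathbb N$ vs.\ $\mathbb N_0$ discrepancy is well taken).
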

		\begin{proof}
			$\Rightarrow$ is clear.
			
			$\Leftarrow$: For any fixed $\theta \in \mathbb {N}_0$, observe that 
            $$
            \mathbb N_0\ni \sigma\mapsto \frac{h(\theta +1,\sigma)}{h(\theta ,\sigma)}= \frac{h(\theta+1,\sigma+1)}{h(\theta,\sigma+1)}:=a(\theta)
            $$ 
            is a constant function in $\sigma$. Therefore, for $\theta \in \mathbb {N}$,
			$$
			h(\theta,\sigma)=a(\theta-1)h(\theta-1,\sigma)
			$$
			and iterating this relation,
			$$h(\theta,\sigma)=a(\theta-1)a(\theta-2)\cdots a(0)h(0,\sigma).$$ With $A(\theta):=a(\theta-1)a(\theta-2)\cdots a(0)$ for $\theta>0$, $A(0):=1$, and $B(\sigma)=h(0,\sigma)$, the lemma is proved.
		\end{proof}
		\begin{lemma}\label{lem:hs}
			Assume \eqref{MM}.   Then, for any fixed $s \in \mathbb {N}_0$, the function 
			\begin{equation}\label{forgotten} 
				h_s(\theta, \sigma):=\frac {L_X(\theta, \sigma,s)}{L_U(\theta, \sigma,s)},
			\end{equation}
			defined on $\mathbb {N}^2_0$, satisfies \eqref{4h}.
		\end{lemma}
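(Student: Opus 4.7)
First I would reformulate \eqref{4h} for $h_s$ in a way that makes the hypothesis \eqref{MM} directly applicable. Since $h_s(\theta,\sigma)=L_X(s,\theta,\sigma)/L_U(s,\sigma,\theta)$, a simple cross-multiplication shows that \eqref{4h} for $h_s$ is equivalent to
$$T_X(s,\theta,\sigma)=T_U(s,\sigma,\theta),$$
where, for a non-negative random variable $W$ and $\gamma>0$, I introduce the auxiliary quantity
$$T_W^{(\gamma)}(s,\theta,\sigma):=\tfrac{L_W^{(\gamma)}(s,\theta,\sigma)\,L_W^{(\gamma)}(s,\theta+1,\sigma+1)}{L_W^{(\gamma)}(s,\theta+1,\sigma)\,L_W^{(\gamma)}(s,\theta,\sigma+1)},$$
writing $T_X=T_X^{(\alpha)}$ and $T_U=T_U^{(\alpha)}$.

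Next I would express $T_W^{(\gamma)}$ in terms of $M_W^{(\gamma)}$. Writing $L=L_W^{(\gamma)}(s,\theta,\sigma)$ and using $L_W^{(\gamma)}(s,\theta+1,\sigma)=L+\Delta_\theta L$, $L_W^{(\gamma)}(s,\theta,\sigma+1)=L+\Delta_\sigma L$, the denominator of $T_W^{(\gamma)}$ expands as
$$(L+\Delta_\theta L)(L+\Delta_\sigma L)=L\,[L+\Delta_\theta L+\Delta_\sigma L]+\Delta_\theta L\,\Delta_\sigma L\stackrel{\eqref{ddL}}{=}(\gamma-1)\,L\,\Delta_\theta\Delta_\sigma L+\Delta_\theta L\,\Delta_\sigma L.$$
Combined with $L_W^{(\gamma)}(s,\theta+1,\sigma+1)=\gamma\,\Delta_\theta\Delta_\sigma L$ from \eqref{D1D2L} and the formula \eqref{new} for $M_W^{(\gamma)}$, this yields the Möbius identity
$$T_W^{(\gamma)}(s,\theta,\sigma)=\tfrac{\gamma\,L\,\Delta_\theta\Delta_\sigma L}{(\gamma-1)\,L\,\Delta_\theta\Delta_\sigma L+\Delta_\theta L\,\Delta_\sigma L}=\tfrac{\gamma\,M_W^{(\gamma)}(s,\theta,\sigma)}{(\gamma-1)\,M_W^{(\gamma)}(s,\theta,\sigma)+1}.$$
The same identity holds for $L_U$ after interchanging the roles of $\theta$ and $\sigma$, since \eqref{ddL} and \eqref{D1D2L} are symmetric in the two difference variables; hence $T_U(s,\sigma,\theta)=\alpha\,M_U(s,\sigma,\theta)/[(\alpha-1)M_U(s,\sigma,\theta)+1]$.

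Finally, since both $T_X$ and $T_U$ are the image of $M_X$ and $M_U$ respectively under the same Möbius function (both using $\gamma=\alpha$), the hypothesis \eqref{MM} $M_X(s,\theta,\sigma)=M_U(s,\sigma,\theta)$ immediately forces $T_X(s,\theta,\sigma)=T_U(s,\sigma,\theta)$, which by the first step is exactly \eqref{4h} for $h_s$. The only nontrivial step is the middle one: recognizing that \eqref{ddL} collapses the cross-product $(L+\Delta_\theta L)(L+\Delta_\sigma L)$ into a combination of the same four quantities $L,\Delta_\theta L,\Delta_\sigma L,\Delta_\theta\Delta_\sigma L$ that define $M_W^{(\gamma)}$. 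Once this is done, the rest is a one-line Möbius manipulation and the conclusion is immediate.
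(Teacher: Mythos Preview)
Your argument is correct. Both proofs hinge on the same identity \eqref{ddL}, but the packaging differs. The paper starts from \eqref{MM}, rewrites it via \eqref{new} as the cross identity \eqref{fourterms}, then substitutes \eqref{ddL} for the mixed differences, adds the term $\Delta_\theta L_X\,\Delta_\sigma L_X\,\Delta_\theta L_U\,\Delta_\sigma L_U$ to both sides, and factors to obtain \eqref{factorization}; this manipulation requires dividing by $\alpha-1$, so the case $\alpha=1$ is handled separately using $L_W^{(1)}(s,\theta,\sigma)=L_W^{(1)}(0,s+\theta,\sigma)$. Your route instead reformulates \eqref{4h} directly as $T_X=T_U$ and observes, via \eqref{ddL} and \eqref{D1D2L}, that $T_W^{(\gamma)}=\gamma M_W^{(\gamma)}/[(\gamma-1)M_W^{(\gamma)}+1]$ is a fixed M\"obius function of $M_W^{(\gamma)}$; then \eqref{MM} gives the conclusion in one line. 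The advantage of your formulation is that it treats $\alpha=1$ uniformly (the M\"obius map degenerates to the identity, so $T_W^{(1)}=M_W^{(1)}$), eliminating the case distinction in the paper. The paper's forward manipulation, on the other hand, makes the algebraic mechanism slightly more explicit. Substantively the two arguments are equivalent rearrangements of the same identity.
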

		\begin{proof}
			

            Referring to \eqref{new}, we see that \eqref{MM} implies that on $\Xi$
			\begin{equation}\label{fourterms}
				L_X \,\Delta_1 \Delta_2 L_X\,\Delta_1 L_U \,\Delta_2 L_U=L_U \,\Delta_1\Delta_2 L_U\,\Delta_1 L_X \,\Delta_2 L_X.
			\end{equation}
			
			First, consider the case $\alpha\neq 1$. Plugging \eqref{ddL} (for both $L_X$ and $L_U$) into \eqref{fourterms}, and canceling $(\alpha-1)$, we get
			\begin{align}\label{twiceasnew}
				L_X (L_X+\Delta_1 L_X+\Delta_2 L_X)\Delta_1 L_U \,\Delta_2 L_U
				=L_U (L_U+\Delta_1 L_U+\Delta_2 L_U)\Delta_1 L_X \,\Delta_2 L_X.
			\end{align}
			Adding to both sides the term $\Delta_1 L_X \Delta_2 L_X \Delta_1 L_U \Delta_2 L_U$, we get
			\begin{align*}
				\Delta_1 L_U \Delta_2 L_U (\Delta_1 L_X+ L_X)(\Delta_2 L_X+ L_X) = \Delta_1 L_X \Delta_2 L_X (\Delta_1 L_U+ L_U)(\Delta_2 L_U+ L_U).
			\end{align*}
			Therefore,
			$$
			\frac {(\Delta_1 L_X +L_X)(\Delta_2 L_X +L_X)}{(\Delta_1 L_U +L_U)(\Delta_2 L_U +L_U)}=\frac {\Delta_1 L_X\Delta_2 L_X }{\Delta_1 L_U\Delta_2 L_U}=\frac {L_X \Delta_1 \Delta_2 L_X}{L_U \Delta_1 \Delta_2 L_U},
			$$ where the last equality follows from \eqref{MM}. By referring to \eqref{D1D2L}, we thus arrive at
			\begin{equation}\label{factorization}
				\frac{L_X(\theta+1,\sigma,s)L_X(\theta,\sigma+1,s)}{L_U(\theta+1,\sigma,s)L_U(\theta,\sigma+1,s)}=\frac{L_X(\theta,\sigma,s)L_X(\theta+1,\sigma+1,s)}{L_U(\theta,\sigma,s)L_U(\theta+1,\sigma+1,s)},
			\end{equation}
			i.e., the relation \eqref{4h} for the function $h_s$ defined in \eqref{forgotten}, as required.
			
			In the case $\alpha=1$, using the obvious identities
			$$
L_X(\theta,\sigma,s)=L_X(\theta+s,\sigma,0), \qquad 			L_{U}(\theta, \sigma,s)=L_{U}(\theta, \sigma+s,0),
			$$
  we establish \eqref{factorization} directly. 
		\end{proof}
		
		As a consequence of Lemma \ref{lem:hAB} and Lemma \ref{lem:hs}, for any $s\in\mathbb{N}_0$, there exist functions $a_s$ and $b_s$ such that for $\theta, \sigma \in\mathbb{N}_0$,
		\begin{equation}\label{LabL}
			h_s(\theta,\sigma)=\frac{L_X(\theta,\sigma,s)}{L_U( \theta,\sigma, s)}=a_s(\theta)b_s(\sigma).
		\end{equation}
		
\subsection{\texorpdfstring{A refined representation of $L_X/L_U$}{A refined representation of LX/LU}}\label{sec:refined}
        
		Next, to provide more insight into the functions $a_s$ and $b_s$, $s \in \mathbb N_0$,  let us introduce a function
\begin{equation}\label{referee}
T_s(\theta,\sigma) := \frac{h_{s-1}(\theta+1,\sigma+1)}{h_s(\theta,\sigma)}=\frac{a_{s-1}(\theta+1)b_{s-1}(\sigma+1)}{a_s(\theta)b_s(\sigma)},\quad s\in\mathbb N.
\end{equation}
        Rewriting it as
		\begin{equation}\label{defT}
			T_s(\theta,\sigma)=\tfrac{L_X(\theta+1,\sigma+1,s-1)L_U(\theta,\sigma,s)}{L_X(\theta,\sigma,s)L_U(\theta+1,\sigma+1,s-1)}.
		\end{equation}
		thanks to \eqref{MM}, it is seen that
		$$
		T_s(\theta,\sigma)=T_{s+1}(\theta,\sigma),\quad (\theta,\sigma)\in\mathbb {N}_0^2,
		$$
		which upon iteration yields
		\begin{equation}\label{T0}
			T_s(\theta,\sigma)=T_1(\theta,\sigma),\quad s\in\mathbb N.
		\end{equation}
		Referring to \eqref{LabL} for $T_{s+1}$ and using \eqref{T0} we obtain
		\begin{equation}\label{astt}
			T_{s+1}(\theta,\sigma)=\frac{a_{s}(\theta+1)b_{s}(\sigma+1)}{a_{s+1}(\theta)b_{s+1}(\sigma)}=a(\theta)\,b(\sigma),\quad s\in\mathbb N_0,
		\end{equation}
		where 
		$$a(\theta)=\frac{a_0(\theta+1)}{a_1(\theta)},\quad b(\sigma)=\frac{b_0(\sigma+1)}{b_1(\sigma)}.$$
		From \eqref{astt}, first with $\sigma=0$ and then with $\theta=0$,  we get
		\begin{equation}\label{ab1}
			\frac{a_s(\theta+1)}{a_{s+1}(\theta)}=a(\theta)K(s)\quad\mbox{and}\quad \frac{b_s(\sigma+1)}{b_{s+1}(\sigma)}=\frac{b(\sigma)}{K(s)},
		\end{equation}
		where 
		$$
		K(s)=\frac{b(0)b_{s+1}(0)}{b_s(1)}=\frac{a_s(1)}{a(0)a_{s+1}(0)}
		$$
		is obtained by setting $\theta=\sigma=0$ in \eqref{ab1}.
		
		From \eqref{ab1}, for $s \in \mathbb N$, we have for $\theta \in \mathbb N$,
		\begin{align*}
			a_{s+1}(\theta)=&\tfrac {a_s(\theta+1)}{K(s)a(\theta)}=
			\tfrac {a_{s-1}(\theta+2)}{K(s)K(s-1)a(\theta)a(\theta+1)}\\
			=&\tfrac {a_0(\theta+s+1)}{K(s)\cdots K(0)\,a(\theta)\cdots a(\theta+s)}
			=\tfrac {a_0(\theta+s+1)\,a(0)\cdots a(\theta-1)}{K(s)\cdots K(0)\,a(0)\cdots a(\theta+s)}
		\end{align*}
		and for $\sigma \in \mathbb N$,
		\begin{align*}
			b_{s+1}(\sigma)=& \tfrac{K(s)b_s(\sigma+1)}{b(\sigma)}=\tfrac{K(s)K(s-1) b_{s-1}(\sigma+2)}{b(\sigma)b(\sigma+1)}\\
			=&\tfrac{K(s)\cdots K(0) b_0(\sigma+s+1)}{b(\sigma) \cdots b(\sigma+s)}=\tfrac {K(s)\cdots K(0)\,b(0) \cdots b(\sigma-1)b_0(\sigma+s+1)}{b(0) \cdots b(\sigma+s)}.
		\end{align*}
		
		As a consequence, replacing $s+1$ by $s$ in the previous formulas,and interpreting empty products as \(1\), we obtain, after canceling the $K$ terms,
		$$
		h_s(\theta,\sigma)=a_s(\theta)b_s(\sigma)
		=\frac {a_0(\theta+s)\,a(0)\cdots a(\theta-1)}{a(0)\cdots a(\theta+s-1)}\,\frac {b(0) \cdots b(\sigma-1)b_0(\sigma+s)}{b(0) \cdots b(\sigma+s-1)}
		$$
		for $\theta,\sigma,s\in \mathbb N_0$. Referring back to \eqref{LabL}, we can rewrite the above display as
		\begin{equation}\label{LXLUN}
			L_X(\theta, \sigma,s)=\frac {N_1(\theta+s)N_2(\sigma)} {N_3(\theta) N_4 (\sigma+s)}\,
			L_U(\theta, \sigma,s),
		\end{equation}
		where
        \begin{gather*}
N_1(z) \propto \frac {a_0(z)}{a(0)\cdots a(z-1)},\qquad N_2(z) \propto b(0) \cdots b(z-1),\\
N_3(z) \propto \frac{1}{a(0)\cdots a(z-1)},\qquad N_4(z) \propto \frac {b(0) \cdots b(z-1)}{b_0(z)}.
        \end{gather*}
        Since both $L_X$ and $L_U$ are positive functions equal to $1$ at $(0, 0, 0)$, we can assume $N_i(0)=1$ and $N_i(z) >0$, for $z \in \mathbb N$ and $i=1, 2, 3, 4$. 
		
	We will also need the following shifted version of \eqref{LXLUN}:
\begin{equation}\label{LXLUNshift}
L_X(\theta+1,\sigma+1,-1)=
\frac{N_1(\theta)N_2(\sigma+1)}
{N_3(\theta+1)N_4(\sigma)}L_U(\theta+1,\sigma+1,-1),
\qquad \theta,\sigma\in\mathbb N_0.
\end{equation}
Indeed, by \eqref{MM} at \(s=0\),
\[
\frac{L_X(\theta,\sigma,0)L_X(\theta+1,\sigma+1,0)}
{L_X(\theta,\sigma,1)L_X(\theta+1,\sigma+1,-1)}
=
\frac{L_U(\theta,\sigma,0)L_U(\theta+1,\sigma+1,0)}
{L_U(\theta,\sigma,1)L_U(\theta+1,\sigma+1,-1)}.
\]
Rearranging and using \eqref{LXLUN} for \(s=0\) and \(s=1\), we obtain \eqref{LXLUNshift}.
		\subsection{\texorpdfstring{Identification of $N_i$ and of the parameters $a,b,\lambda$}{Identification of Ni and of the parameters a,b,lambda}}\label{sec:Ni}
		
		Next, we identify the functions $N_i$, $i=1,2,3,4$.
		Applying $\Delta_1$ to \eqref{LXLUN}, we get
		$$
		\Delta_1 L_X (\theta, \sigma,s)= \tfrac {N_2(\sigma)}{N_4(\sigma+s)} \left[\tfrac {N_1(\theta+s+1)}{N_3(\theta+1)}L_U(\theta+1,\sigma,s)-\tfrac {N_1(\theta+s)}{N_3(\theta)} L_U(\theta, \sigma, s)\right].
		$$
		Referring again to \eqref{LXLUN}, and to \eqref{LXLUNshift} in the case \(s=0\), we see that 
		$$
		-\tfrac1\alpha\,L_X (\theta+1,\sigma+1,s-1)=-\tfrac1\alpha\,\tfrac {N_1(\theta+s)N_2(\sigma+1)}{N_3(\theta+1)N_4(\sigma+s)}\,L_U (\theta+1, \sigma+1,s-1),
		$$
        and since the left-hand sides of the  two above identities are equal by the first identity of \eqref{W},
		equating the right-hand sides, and multiplying both sides by the factor $$\frac {N_3(\theta+1)N_4(\sigma+s)}{N_1(\theta+s)N_2(\sigma)},$$
		we obtain
		\begin{align}\label{aformula}
			n_3(\theta)L_U(\theta, \sigma,s) -n_1(\theta+s)L_U(\theta+1,\sigma,s)
			= \tfrac{n_2(\sigma)}\alpha\,L_U(\theta+1, \sigma+1,s-1)
		\end{align}
		with the notation \begin{equation}\label{ratios}
        n_i(z)=\frac {N_i(z+1)}{N_i(z)}, \qquad i=1,2,3,4.
        \end{equation} 
        In an analogous way, by applying $\Delta_2$ to \eqref{LXLUN}, we get
		$$
		\Delta_2 L_X (\theta, \sigma,s)= \tfrac {N_1(\theta+s)}{N_3(\theta)} \left[\tfrac {N_2(\sigma+1)}{N_4(\sigma+s+1)}L_U(\theta,\sigma+1,s)-\tfrac {N_2(\sigma)}{N_4(\sigma+s)} L_U(\theta, \sigma,s)\right].
		$$
		Referring again to \eqref{LXLUN}, 
		$$
		-L_X (\theta,\sigma,s+1)=-\frac {N_1(\theta+s+1)N_2(\sigma)}{N_3(\theta)N_4(\sigma+s+1)}L_U (\theta, \sigma,s+1).
		$$
        and since the left-hand sides of the two last identities are equal by the second identity of \eqref{W}, by equating the right-hand sides and multiplying both sides by
		$$
		\frac{N_3(\theta)N_4(\sigma+s+1)}{N_1(\theta+s)N_2(\sigma)},
		$$
		we obtain 
		\begin{align}\label{aformula2}
			n_2(\sigma) L_U(\theta,\sigma+1,s)-n_4(\sigma+s)L_U(\theta,\sigma,s)
			=-n_1(\theta+s) L_U(\theta, \sigma,s+1).
		\end{align}
		
		Subtracting \eqref{aformula2} from \eqref{aformula}  side-wise and using both identities of \eqref{uuu}  we obtain
\[
\bigl(n_3(\theta)-n_1(\theta+s)+n_4(\sigma+s)-n_2(\sigma)\bigr)L_U(\theta,\sigma,s)=0.
\]
Dividing by $L_U(\theta,\sigma,s)$, we get, for $\theta,\sigma, s\in\mathbb N_0$,
\begin{equation}\label{best}
n_1(\theta+s)-n_3(\theta)=n_4(\sigma+s)-n_2(\sigma)=f(s),
\end{equation}
for some function $f$, where the last equality follows from the principle of separation of variables. This is exactly the Cauchy-Pexider equation we already dealt with in the proof of Theorem \ref{charac1}, see \eqref{CPE}. Thus, we conclude that $f$ and $n_i$, $i=1,2,3,4$, are affine functions with a common slope $q\in \mathbb R$.

		We will show that $q\neq 0$. Assume that $q=0$.  Then, recalling \eqref{ratios} and $N_i(0)=1$, we deduce that $N_i(s)=a_i^s$, 
		$s=0,1,\ldots$, where $a_i$, $i=1,2,3,4$, are positive constants satisfying $a_1-a_3=a_4-a_2$. Thus, \eqref{LXLUN} yields
		\begin{align*}
			\E\left[\ \left(\tfrac{\alpha X}{1+\alpha X}\right)^\theta\,\left(\tfrac1{1+X}\right)^\sigma\, \left(\tfrac X{1+X}\right)^s\right]
			=\left(\tfrac{a_2}{a_4}\right)^\sigma\left(\tfrac{a_1}{a_3}\right)^\theta \, \left(\tfrac{a_1}{a_4} \right)^s \E\left[\left(\tfrac1{1+U}\right)^\theta\,\left(\tfrac{\alpha U}{1+\alpha U}\right)^\sigma \left(\tfrac{U}{1+U}\right)^s\,\right].
		\end{align*}
		Consequently, we have
		$$
		\left(\,\tfrac{\alpha X}{1+\alpha X},\,\tfrac1{1+X},\,\tfrac X{1+X}\right)\stackrel{d}{=}\left(\tfrac{a_1}{a_3}\,\tfrac1{1+U},\,\tfrac{a_2}{a_4}\tfrac{\alpha U}{1+\alpha U}, \tfrac{a_1}{a_4}\,\tfrac{U}{1+U},\right),
		$$
		whence $$\tfrac{a_4}{a_1}\,\tfrac{X}{1+X}+\tfrac{a_3}{a_1}\tfrac{\alpha X}{1+\alpha X}=1.$$
		Equivalently,
		$$
		\alpha a_2 X^2+\{\alpha(a_2-a_4)+a_4-a_1\}X-a_1=0,
		$$
		and since $a_1,a_2>0$ and $X$ is positive and non-Dirac, this is impossible. Consequently $q\neq 0$. 
        
        Therefore, we can write $qa_i$ instead of $a_i$ which gives $n_i(z)=q(z+a_i)$, $i=1,2,3,4$. Then each $N_i(z)$ contains a factor $q^z$, and these factors cancel in the ratio appearing in \eqref{LXLUN}. Thus the value of $q$ does not affect \eqref{LXLUN}, and without loss of generality we may take $q=1$.
		
		Setting
		$$
		\lambda= \tfrac {n_3(0)-n_2(0)}{2},\,\, a=\tfrac {n_3(0)+n_2(0)}{2}, \,\, b=f(0)+\tfrac {n_3(0)+n_2(0)}{2},
		$$
		we get $f(z)=z+b-a$, and 
		\begin{align}
			\begin{split}\label{n1234}
				n_1(z)=z+b+\lambda, & \quad n_2(z)=z+a-\lambda,\\ 
				n_3(z)=z+a+\lambda, & \quad n_4(z)=z+b-\lambda.
			\end{split}
		\end{align}
   Since each function $n_i$ is positive on $\mathbb{N}_0$, we have $n_i(0)>0$, i.e. $b\pm\lambda>0$ and $a\pm\lambda>0$, which is equivalent to $|\lambda|<\min\{a,b\}$.
		
		Observing that \eqref{ratios} implies that 
        $$
        N_i(z)=n_i (z-1)\cdot \cdots \cdot n_i(0) \cdot N_i(0),\qquad i=1,2,3,4
        $$
        and recalling that $N_i(0)=1$, for $i=1,2,3,4$, we end up with
		\begin{align*}
			N_1(z)=(b+\lambda)^{(z)}, & \quad
			N_2(z)=(a-\lambda)^{(z)}, \\
			N_3(z)=(a+\lambda)^{(z)}, & \quad 
			N_4(z)=(b-\lambda)^{(z)},
		\end{align*}
		and thus, for \(\theta,\sigma,s\in\mathbb N_0\), 
		\begin{equation}\label{final}
			h_s(\theta, \sigma)=\frac {L_X(\theta, \sigma,s)}{L_U(\theta, \sigma,s)}=\frac {(b+\lambda)^{(\theta+s)}(a-\lambda)^{(\sigma)}}{(b-\lambda)^{(\sigma+s)}(a+\lambda)^{(\theta)}}.
		\end{equation}
		
		\subsection{\texorpdfstring{A hypergeometric difference equation for $L_U$}{A hypergeometric difference equation for LU}}\label{sec:hypergeo}
		
		\begin{lemma} For $(\theta,\sigma,s) \in\mathbb{N}_0^3$ 
        \begin{multline}\label{explicit}
        \left\{\sigma+(2\alpha-1)\theta-(1-\alpha)(1+2\lambda)+\alpha(s+a+b)
    \right\}L_U(\theta+1,\sigma,s)\\=\alpha(\theta+a+\lambda)L_U(\theta,\sigma,s)-(1-\alpha)(\theta+1+s+b+\lambda)L_U(\theta+2,\sigma,s).
        \end{multline}
			
		\end{lemma}
		\begin{proof}
			From identity \eqref{aformula2}, by rewriting $n_1(\theta+s)$ as $n_3(\theta)-n_2(\sigma)+n_4(\sigma+s)$, and using the first identity in \eqref{uuu}, we obtain
			\begin{equation}\label{nnn}
				n_4(\sigma+s)\,L_U(\theta+1,\sigma,s) =(n_3(\theta)-n_2(\sigma))L_U(\theta,\sigma,s+1)+n_2(\sigma)L_U(\theta,\sigma+1,s).
			\end{equation}
			Notice that subtracting sidewise the second from the first of equations of \eqref{uuu}  (the second taken with $s+1$ replacing $s$ and the first taken with $\sigma+1$ replacing $\sigma$) one gets the identity
			$$
			L_U(\theta,\sigma,s+1)-L_U(\theta,\sigma+1,s)=\tfrac{1-\alpha}{\alpha}\,L_U(\theta+1,\sigma+1,s),
			$$
			that, inserted in \eqref{nnn}, yields
			\begin{align}\label{n4s}
				n_4(\sigma+s)\,L_U(\theta+1,\sigma,s) =n_3(\theta)L_U(\theta,\sigma,s+1)-\tfrac{1-\alpha}{\alpha}\,n_2(\sigma)L_U(\theta+1,\sigma+1,s).
			\end{align}
			Again referring to the first identity of \eqref{uuu}  we arrive at
			\begin{multline}
				(n_4(\sigma+s)+\alpha n_3(\theta))\,L_U(\theta+1,\sigma,s) =
				\alpha n_3(\theta)L_U(\theta,\sigma,s)\\+(1-\alpha)\left\{n_3(\theta)L_U(\theta,\sigma,s+1)-\tfrac 1{\alpha}n_2(\sigma)L_U(\theta+1,\sigma+1,s)\right \}.
			\end{multline}
			Now we refer to \eqref{aformula}, getting
			\begin{multline}
				(n_4(\sigma+s)+\alpha n_3(\theta))\,L_U(\theta+1,\sigma,s) =
				\alpha n_3(\theta)L_U(\theta,\sigma,s)\\+(1-\alpha)n_1(\theta+1+s)L_U(\theta+1,\sigma,s+1),
			\end{multline}
			that, again by the first identity of \eqref{uuu}, written with $\theta+1$ replacing $\theta$, is rewritten as
        \begin{multline}\label{R15a}
				\left\{n_4(\sigma+s)+\alpha n_3(\theta)-(1-\alpha)n_1(\theta+1+s)\right\}\,L_U(\theta+1,\sigma,s)\\=
				\alpha n_3(\theta)L_U(\theta,\sigma,s)-(1-\alpha)n_1(\theta+1+s)L_U(\theta+2,\sigma,s),
			\end{multline}
            which is rewritten
            as in \eqref{explicit}, once the expressions for the $n_i$'s in \eqref{n1234} are substituted.
		\end{proof}

		The difference equation \eqref{R15a} in $\theta$ for fixed $(\sigma,s)$ is a second-order linear difference equation with affine coefficients. Equations of this kind were named as hypergeometric difference equations in \cite[Chapter III]{HDE}, where
        they were defined on a region of the complex plane.
        Each such equation can be transformed into its normal form, which is introduced below; cf. \cite[(99) and (103)]{HDE}. 
		\begin{lemma}\label{lem:hde}\ 
			\begin{enumerate}
				\item[(i)] If $\alpha\neq 1$, the function $\ell(x)=L_U(x-\beta_3,0,0)$, defined for $x \in \beta_3+\mathbb {N}_0$, satisfies the hypergeometric difference equation of the form
				\begin{multline}\label{hde}
					(x+2+\beta_1+\beta_2)\ell(x+2)\\
					-\left \{(1+\rho)(x+1)+\rho\beta_1+\beta_2 \right\}\ell(x+1)
					+\rho\, x\,\ell (x)=0,
				\end{multline}
				where \begin{equation}\label{parameters}\rho=\frac{\alpha}{\alpha-1}, \qquad  \beta_1=b-\lambda-1>-1, \qquad \beta_2=\lambda-a<0, \qquad \beta_3=\lambda+a>0.
                \end{equation}
				\item[(ii)] If $\alpha=1$, then for $\theta \in\mathbb{N}_0$,
				\[
				L_U(\theta,0,0) = \tfrac{B(b-\lambda,a+\lambda+\theta)}{B(b-\lambda,a+\lambda)}.
				\]
			\end{enumerate}
		\end{lemma}
		\begin{proof}
			(i)   Equation \eqref{explicit} for $s=\sigma=0$ is 
			\begin{multline*}
				(1-\alpha)(\theta+b+\lambda+1)L_U(\theta+2,0,0)\\
				+\left \{(2\alpha-1)\theta+\alpha(a+b)-(1-\alpha)(1+2\lambda)\right \}L_U(\theta+1,0,0)\\-\alpha(\theta+a+\lambda) L_U(\theta,0,0)=0,\qquad \theta \in\mathbb{N}_0.
			\end{multline*}
			which by dividing for $1-\alpha$ is easily seen to have the form 
            \begin{multline*}
(\theta+2+\beta_1+\beta_2+\beta_3)L_U(\theta+2,0,0)\\-\left \{ (1+\rho)(\theta+1+\beta_3)+\beta_1\rho+\beta_2 \right \}L_U(\theta+1,0,0)+\rho(\theta+\beta_3)L_U(\theta,0,0)=0,
            \end{multline*}
            turned into the form \eqref{hde}, once $\ell(\theta+\beta_3)=L_U(\theta,0,0)$ is introduced, for $\theta \in \mathbb N_0$, and the replacement $x=\theta+\beta_3$ is performed.
			
			(ii) For $\alpha=1$ we obtain a first order difference equation of the form 
			\begin{align*}
				\left(\theta+a+b\right)L_U(\theta+1,0,0)-(\theta+a+\lambda) L_U(\theta,0,0)=0,\qquad \theta \in\mathbb{N}_0.
			\end{align*}
			Under the initial condition $L_U(0,0,0)=1$, it has a unique solution
			\[
			L_U(\theta,0,0) = \tfrac{(a+\lambda)^{(\theta)}}{(a+b)^{(\theta)}}=\tfrac{\Gamma(a+b)\Gamma(a+\lambda+\theta)}{\Gamma(a+b+\theta)\Gamma(a+\lambda)}=
            \tfrac{B(b-\lambda,a+\lambda+\theta)}{B(b-\lambda,a+\lambda)}. 
			\]
		\end{proof}

We refer to $(\rho ,\beta_1,\beta_2,\beta_3)$ as the parameters of \eqref{hde}. For the sequel we introduce the $\rho$-difference operator
\begin{equation}\label{rhodifference}
\delta _{\rho}f(x)=f(x+1)-\rho f(x),
\end{equation}
that for $\rho=1$, becomes the standard difference operator.

        \begin{lemma}\label{lem:HDEn}\ 
			With $\alpha\neq 1$, the sequence $(\ell^{(n)}, n \in \mathbb N)$ defined by $\ell^{(1)}=\ell$ and $\ell^{(n)}=\delta_ {\rho}^{n-1}\ell$, for $n=2,3,\ldots$ has the following properties: 
			\begin{enumerate}
				\item[(i)] 
				 The function $\ell^{(n)}$ satisfies \eqref{hde} with parameters $(\rho,\beta_1,\beta_2^{(n)},\beta_3)$, where $\beta_2^{(n)}:=\beta_2+n-1$;
				\item[(ii)] We have for $x \in \beta_3 +\mathbb {N}_0$,
				\begin{align*}
					\ell^{(n)}(x) &= \tfrac{1}{(1-\alpha)^{n-1}}\E\left[ \tfrac{(1+\alpha U)^{n-1}}{(1+U)^{x-\beta_3+n-1}}\right] =  \left(\tfrac{\alpha}{1-\alpha}\right)^{n-1} L_U(x-\beta_3,-(n-1),n-1). 
				\end{align*}
			\end{enumerate}
		\end{lemma}
		\begin{proof}
			(i) We proceed by induction. We already know that $\ell=\ell^{(1)}$ satisfies \eqref{hde} with the appropriate parameters. Assume that $\ell^{(n-1)}$ satisfies \eqref{hde} with the parameters $(\rho,\beta_1,\beta_2^{(n-1)}, \beta_3)$. Then, for $x\in \beta_3+\mathbb N_0$, the difference between the left-hand sides of \eqref{hde} at $x+1$ and the left-hand sides of \eqref{hde} evaluated at $x$, multiplied by $\rho$, gives 
			\begin{multline*}
				(x+2+\beta_1+\beta_2^{(n-1)}) \delta_{\rho} \ell^{(n-1)}(x+2)\\
				- 
				\left( (1+\rho)(x+1)+\beta_1\rho+\beta_2^{(n-1)}\right) \delta_{\rho} \ell^{(n-1)}(x+1)+ \rho x\delta_{\rho} \ell^{(n-1)}(x)\\
				+  \delta_{\rho} \ell^{(n-1)}(x+2)-\delta_{\rho} \ell^{(n-1)}(x+1).
			\end{multline*}
            Thus, the above difference equation can be rewritten as
			\begin{multline*}
				(x+2+\beta_1+(\beta_2^{(n-1)}+1)) \ell^{(n)}(x+2)\\
				- 
				\left( (1+\rho)(x+1)+\beta_1\rho+(\beta_2^{(n-1)}+1)\right) \ell^{(n)}(x+1)\\
				+ \rho x \ell^{(n)}(x)=0.
			\end{multline*}
			Since $\beta_2^{(n)} = \beta_2^{(n-1)}+1$, the proof of (i) is complete. 
			
			(ii) We proceed again by induction. The case $n=1$ follows from the definition of $\ell^{(1)}(x)=\ell(x)=L_U(x -\beta_3,0,0)$. Assume that the assertion holds for $n\in\N$, and consider
			\begin{align*}
				\ell^{(n+1)}(x) &= \delta_\rho\,\ell^{(n)}(x)=\ell^{(n)}(x+1) - \rho \ell^{(n)}(x) \\
				&= \tfrac{1}{(1-\alpha)^{n-1}}\E\left[ \tfrac{(1+\alpha U)^{n-1}}{(1+U)^{x-\beta_3+n}}\right] +\tfrac{\alpha}{(1-\alpha)^{n}}\E\left[ \tfrac{(1+\alpha U)^{n-1}}{(1+U)^{x-\beta_3+n-1}}\right]\\
				&=\tfrac{1}{(1-\alpha)^n}\E\left[\tfrac{(1+\alpha U)^{n-1}}{(1+U)^{x-\beta_3+n}}\left(1-\alpha + \alpha(1+U)\right) \right] \\
				&= \tfrac{1}{(1-\alpha)^n}\E\left[\tfrac{(1+\alpha U)^{n}}{(1+U)^{x-\beta_3+n}}\right]= (\tfrac {\alpha}{1-\alpha})^{n} L_U(x-\beta_3,-n, n),
			\end{align*}
			where the last equality follows from the definition of $L_U$. This proves the representation formula in (ii).
		\end{proof}

   The representation of solutions to equations of the form \eqref{hde} can be extracted from \cite[pp.~95--96]{HDE}. The formulation there is given in the complex domain. To avoid discussion of branch choices, in particular for the term \((-1)^{x-\beta_3}\), we give a direct  proof adapted to our setting of $x-\beta_3\in \mathbb N_0$.
		\begin{lemma}\label{lem:HDEsolution}
			Consider the linear equation \eqref{hde}, defined for $x \in \beta_3+\mathbb{N}_0$, with $\beta_1,\beta_2>-1$ and $\beta_3>0$. The linear space of its solutions has the basis $\{\ell_1,\ell_2\}$, where 
			\begin{itemize}
            \item[(i)] if $\rho>1$, 
                \begin{equation}\label{solutions1}
                \ell_1(x)=\int_0^{1} t^{x-1}(1-t)^{\beta_1}(\rho-t)^{\beta_2} dt, \qquad \ell_2(x)=\int_{1}^{\rho} t^{x-1}(t-1)^{\beta_1}(\rho-t)^{\beta_2} dt;
                \end{equation}
				\item[(ii)]if $\rho<0$,
             \begin{equation}\label{solutions2}
              \ell_1(x)=\int_0^{1} t^{x-1}(1-t)^{\beta_1}(t-\rho)^{\beta_2}dt, \qquad  \ell_2(x)= 
             (-1)^{x-\beta_3}\int_{\rho}^{0} (-t)^{x-1}(1-t)^{\beta_1}(t-\rho)^{\beta_2} dt.
             \end{equation}
				
			\end{itemize}
		\end{lemma}
		
\begin{proof}
    In both cases it suffices to prove that $\ell_1$ and $\ell_2$ are linearly independent solutions of \eqref{hde}, since the set of solutions is a two-dimensional linear space.

    Denote by $\mathcal Q(\ell)$ the left-hand side of \eqref{hde} for fixed $x\in\beta_3+\mathbb{N}_0$. Elementary computations give the representation 
\begin{align}
\mathcal Q(\ell_i)=\int_{a_i}^{b_i}\,\tfrac{\mathrm d}{\mathrm dt}\,\psi_i(t)\, dt=\psi_i(b_i)-\psi_i(a_i),\label{11}
\end{align}
where the triplets $(\psi(t),\,a,\,b)$ are of the form
\[
(\psi_i(t),a_i,b_i)=
\begin{cases}
\left(t^x(1-t)^{\beta_1+1}(\rho-t)^{\beta_2+1},\,0,\,1\right),
& \rho>1,\ i=1, \\

\left(-t^x(t-1)^{\beta_1+1}(\rho-t)^{\beta_2+1},\,1,\,\rho\right),
& \rho>1,\ i=2, \\

\left(-t^x(1-t)^{\beta_1+1}(t-\rho)^{\beta_2+1},\,0,\,1\right),
& \rho<0,\ i=1, \\

\left((-1)^{x-\beta_3}(-t)^x(1-t)^{\beta_1+1}(t-\rho)^{\beta_2+1},\,\rho,\,0\right),
& \rho<0,\ i=2.
\end{cases}
\]
Since $x,\beta_1+1,\beta_2+1>0$, the functions $\psi_i$, $i=1,2$, vanish at the corresponding endpoints. 
Thus the functions $\ell_i$, $i=1,2$, defined in \eqref{solutions1} and \eqref{solutions2} are solutions to \eqref{hde}.

It remains to show that \(\ell_1\) and \(\ell_2\) are linearly independent.
If \(\rho>1\) then, as \(x\to\infty\),
\[
\ell_1(x)\sim C_1 x^{-\beta_1-1},
\qquad
\ell_2(x)\sim C_2\rho^x x^{-\beta_2-1},
\]
with some constants \(C_i>0\). Since \(\rho>1\) we see \(\ell_1\) and \(\ell_2\) are not proportional.

If \(\rho<0\), write \(x=\beta_3+n\), \(n\in\mathbb N_0\). Then
\[
\ell_1(x)>0,
\qquad
\ell_2(x)=(-1)^n \tilde\ell_2(x),
\]
where \(\tilde\ell_2(x)>0\). Hence \(\ell_2(\beta_3+n)\) changes sign with \(n\), while \(\ell_1(\beta_3+n)\) is always positive. Thus the two solutions are not proportional.

\end{proof}



		\begin{lemma}
			Assume that $|\lambda|<\min\{a,b\}$. Then $U\sim \mathrm{GB}_{II}(-\lambda,a,b;\alpha)$.
		\end{lemma}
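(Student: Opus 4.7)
The plan is to show $L_U(0,0,x) = \E[(1+U)^{-x}]$ equals $\E[(1+W)^{-x}]$ for $W \sim \mathrm{GB}_{II}(-\lambda,a,b;\alpha)$ and every $x\in\N_0$; since both $1/(1+U)$ and $1/(1+W)$ lie in $(0,1]$, the Hausdorff moment problem will then identify $U$ with $W$ in distribution. The case $\alpha=1$ is immediate from Lemma \ref{lem:hde}(ii), whose closed form matches the moments of $\mathrm{B}_{II}(b-\lambda,a+\lambda) = \mathrm{GB}_{II}(-\lambda,a,b;1)$.

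For $\alpha\neq 1$, Lemma \ref{lem:HDEsolution} need not apply to $\ell$ itself because $\beta_2 = \lambda-a$ may be $\leq -1$. The remedy is to pass to $\ell^{(n)}$ with $n>a-\lambda$, so that $\beta_2^{(n)}=\beta_2+n-1>-1$. By Lemma \ref{lem:HDEn}(i), $\ell^{(n)}$ still satisfies the HDE with shifted parameters, and Lemma \ref{lem:HDEsolution} then yields
\[
\ell^{(n)}(x+\beta_3) = \delta_1\,\ell_1(x+\beta_3) + \delta_2\,\ell_2(x+\beta_3), \qquad x \in \N_0,
\]
with basis from case (ii) when $\alpha>1$ or case (i) when $\alpha<1$.

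The hard part will be showing $\delta_2=0$. By Lemma \ref{lem:HDEn}(ii), $(1-\alpha)^{n-1}\ell^{(n)}(x+\beta_3) = \E[(1+\alpha U)^{n-1}/(1+U)^{x+n-1}]$ is a Hausdorff moment sequence on $(0,1]$ (the push-forward by $u\mapsto 1/(1+u)$ of a weighted law of $U$), hence completely monotone and bounded in $x$. The basis $\ell_1(x+\beta_3)=\int_0^1 t^x\,d\mu_1(t)$ is itself of this form. In case (ii) ($\rho_2>1$), $\ell_2$ is an integral of $t^{x-1}$ over $[\rho_1,\rho_2]\subset(1,\infty)$, grows like $\rho_2^x$, and thus violates boundedness unless $\delta_2=0$. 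In case (i) ($\rho_2<0$), the substitution $s=-t$ gives $\ell_2(x+\beta_3) = (-1)^x F(x)$ with $F>0$; the oscillation is incompatible with any completely monotone correction of $\delta_1\ell_1$, and an iterated-differences check then forces $\delta_2=0$. Once $\delta_2$ is eliminated, $t=1/(1+u)$ turns $\ell_1(x+\beta_3)$ into a constant multiple of $\E_W[(1+\alpha W)^{n-1}/(1+W)^{x+n-1}]$ for $W\sim\mathrm{GB}_{II}(-\lambda,a,b;\alpha)$, with the constant fixed by the value at $x=0$. This identifies $\ell^{(n)}$ with the analogous quantity for $W$; invoking the HDE satisfied by both $L_U(0,0,\cdot-\beta_3)$ and $L_W(0,0,\cdot-\beta_3)$ together with the common value $1$ at $\beta_3$ then propagates the identity to $L_U(0,0,x) = \E_W[(1+W)^{-x}]$ for all $x\in\N_0$, and the moment problem finishes the proof.
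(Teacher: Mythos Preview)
Your overall strategy matches the paper's: pass to $\ell^{(n)}$ with $n>a-\lambda$ so that Lemma~\ref{lem:HDEsolution} applies, represent $\ell^{(n)}$ as $\delta_1\ell_1+\delta_2\ell_2$, and eliminate $\delta_2$. For $\alpha>1$ your boundedness argument is correct and coincides with the paper's. The gaps are in the case $\alpha\in(0,1)$ and in the final ``propagation'' step.

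For $\alpha\in(0,1)$ the assertion that ``oscillation is incompatible with complete monotonicity'' is not a proof. When $\alpha\in(1/2,1)$ one has $|\rho_2|>1$, so $|\ell_2|$ is unbounded and your boundedness argument still works (the paper argues this way too, after the substitution $t=-1/(1+u)$). But when $\alpha\in(0,1/2]$ one has $|\rho_2|\le 1$, and then $\ell_2(x+\beta_3)=(-1)^x\int_0^{|\rho_2|}s^x\,d\mu_2$ is bounded and decays; nothing in your sketch rules it out. The paper handles this by comparing even and odd $x$: the even-$x$ identity forces $\P_{\tilde A}=c_1\P_{\tilde B}+c_2\P_{\tilde C}$ as measures on $[0,1]$ (Hausdorff moment problem), while the odd-$x$ identity forces $\P_{\tilde A}=c_1\P_{\tilde B}-c_2\P_{\tilde C}$, whence $c_2=0$. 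An equivalent argument via the moment problem for signed measures on $[-1,1]$ would also work, but you must supply one. Finally, your propagation from $\ell^{(n)}$ back to $\ell$ is both under-justified and unnecessary. The HDE is second order, so the single condition $\ell_U(\beta_3)=\ell_W(\beta_3)=1$ together with $\ell_U^{(n)}=\ell_W^{(n)}$ does not automatically give $\ell_U(\beta_3+1)=\ell_W(\beta_3+1)$; you would need $(S-\rho_2 I)^{n-1}$ to be injective on the one-dimensional space of HDE solutions vanishing at $\beta_3$, which requires checking that no $p(x)\rho_2^x$ with $\deg p<n-1$ solves the HDE. The paper sidesteps this entirely: since $(1-\alpha)^{n-1}\ell^{(n)}(x+\beta_3)=\E\bigl[(1+\alpha U)^{n-1}(1+U)^{-(x+n-1)}\bigr]$ already gives the $x$-moments of $1/(1+U)$ against the strictly positive weight $(\alpha+(1-\alpha)t)^{n-1}$, the equality $\ell_U^{(n)}=\ell_W^{(n)}$ identifies the law of $U$ directly via the Hausdorff moment problem, with no need to recover $\ell$.
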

		\begin{proof}
			If $\alpha=1$, the result follows directly from Lemma \ref{lem:hde} (ii): this gives, for $\theta \in \mathbb N_0$, 
			\[
			\E\left[\tfrac{1}{(1+U)^{\theta}}\right] = \tfrac{B(b-\lambda,a+\lambda+\theta)}{B(b-\lambda,a+\lambda)} = \int_0^\infty \tfrac{1}{(1+u)^{\theta}}\tfrac{1}{B(b-\lambda,a+\lambda)}\tfrac{u^{b-\lambda-1}}{(1+u)^{a+b}}du,
			\]
			which implies that $U\sim \mathrm{B}_{II}(b-\lambda,a+\lambda)=\mathrm{GB}_{II}(-\lambda,a,b;1)$.
			
			Now, we consider the case $\alpha\neq 1$. 
			As in Lemma  \ref{lem:hde} (i), let $\rho=\alpha/(\alpha-1)$, $\beta_1=b-\lambda-1$, $\beta_2=\lambda-a$, and $\beta_3=a+\lambda$. By assumption, we have $\beta_1>-1$ and $\beta_3>0$, but $\beta_2<0$. 
            
To apply Lemma \ref{lem:HDEsolution}, whose assumption is \(\beta_2>-1\), we use Lemma \ref{lem:HDEn} to shift the second parameter from \(\beta_2\) to
\(\beta_2^{(n)}=\beta_2+n-1\), and fix \(n\in\mathbb{N}\) so that \(\beta_2^{(n)}>-1\).       
			
			We will consider two cases  $\alpha>1$ and $\alpha\in(0,1)$, separately:
			
			\begin{enumerate}
				\item {\bf Case  $\alpha>1$.} In this case $\rho=\tfrac{\alpha}{\alpha-1}>1$. Applying Lemma \ref{lem:HDEn} (ii) and Lemma \ref{lem:HDEsolution} (i), 
                %
                we obtain 
                for
                $x \in \beta_3+\mathbb{N}_0$, 
				\begin{align*}
					\ell^{(n)}(x)= \delta_1 \ell_1(x) 
					+\delta_2 \ell_2(x)
				\end{align*}
                for some constants $\delta_1$ and $\delta_2$. Note that $\ell_2(x)$  diverges to $\infty$ as $x\to\infty$.  By Lemma \ref{lem:HDEn} (ii), with $x=\beta_3+\theta$, $\theta\in\mathbb N_0$, we have  $$
                \ell^{(n)}(\beta_3+\theta)=\frac {1}{(1-\alpha)^{n-1}}\E\left[ \tfrac{(1+\alpha U)^{n-1}}{(1+U)^{\theta+n-1}}\right] \stackrel{\theta\to\infty}{\longrightarrow}  0,
                $$
      which allows us to conclude that $\delta_2=0$. As a consequence, as a function of $\theta\in\mathbb{N}_0$,
				\begin{align*}
					\E\left[ \tfrac{(1+\alpha U)^{n-1}}{(1+U)^{\theta +n-1}}\right] &\propto  \int_0^{1} t^{\theta +\beta_3-1}(1-t)^{\beta_1}\left(\rho-t\right)^{\beta_2+n-1} dt  \\
					&\propto \int_0^\infty \tfrac{(1+\alpha u)^{n-1}}{(1+u)^{\theta+n-1}} \tfrac{u^{\beta_1}(1+\alpha u )^{\beta_2}}{(1+u)^{\beta_1+\beta_2+\beta_3+1}} du \\
				&	= \int_0^\infty \tfrac{(1+\alpha u)^{n-1}}{(1+u)^{\theta+n-1}} \tfrac{u^{b-\lambda-1}}{(1+\alpha u)^{a-\lambda}(1+u)^{b+\lambda}}du,
                    \end{align*}
				
                with the substitution $t=(1+u)^{-1}$ in the integral. Therefore $U$ has a density proportional to
				\begin{equation}\label{exactlaw}
				\tfrac{u^{b-\lambda-1}}{(1+\alpha u)^{a-\lambda}(1+u)^{b+\lambda}} \mathbf 1_{(0,\infty)}(u),
				\end{equation}
				i.e., $U\sim\mathrm{GB}_{II}(-\lambda,a,b;\alpha)$. 
				
				\item {\bf Case  $\alpha\in(0,1)$.} Here $\rho<0$. By Lemma \ref{lem:HDEn} and Lemma \ref{lem:HDEsolution} (ii), 
                $x \in \beta_3+\mathbb{N}_0$
				\begin{align}\label{lnx}
					 \ell^{(n)}(x) =
					 \delta_1 \ell_1(x) 
					+ \delta_2 \ell_2(x)
		\end{align}
       for some constants $ \delta_1$ and $ \delta_2$.
Setting $x = \beta_3+\theta$, $\theta\in\mathbb N$, and $d_i = (1-\alpha)^{n-1} \delta_i$, for $i=1,2$, using  Lemma \ref{lem:HDEn} (ii), \eqref{lnx} can be rewritten as 
\begin{align}\label{lnx_}
J(\theta):=\E\left[\frac{(1+\alpha U)^{n-1}}{(1+U)^{\theta+n-1}}\right]
= d_1 \ell_1(\beta_3+\theta)+d_2(-1)^{\theta}\tilde\ell_2(\theta),\qquad \theta\in\mathbb N_0,
\end{align}
where
\[
\tilde \ell_2(\theta)=\int_{\rho}^{0} (-t)^{\theta+\beta_3-1}(1-t)^{\beta_1}(t-\rho)^{\beta_2+n-1}\,dt .
\]

Since
\[
\frac{1+\alpha U}{1+U}=\alpha+(1-\alpha)A\qquad \mbox{with }A:=\frac1{1+U}\in(0,1],
\]
we have, writing the law of $A$ as $\mathbb {P}_A$,
\[
J(\theta)=\int y^\theta\,\mu_0(dy),
\qquad
\mu_0(dy):=(\alpha+(1-\alpha)y)^{n-1}\,\P_A(dy).
\]
where \(\mu_0\) is a nonzero finite measure on \([0,1]\).

Likewise,
\[
\ell_1(\theta+\beta_3)=\int t^\theta\,\mu_1(dt),
\qquad
\mu_1(dt):=t^{\beta_3-1}(1-t)^{\beta_1}(t-\rho)^{\beta_2+n-1}\mathbf 1_{(0,1)}(t)\,dt,
\]
and after the change of variables \(r=-t\), 
\[
\tilde\ell_2(\theta)=\int_0^{-\rho} r^{\theta+\beta_3-1}(1+r)^{\beta_1}(-\rho-r)^{\beta_2+n-1}\,dr
=\int r^\theta\,\mu_2(dr),
\]
where
\[
\mu_2(dr):=r^{\beta_3-1}(1+r)^{\beta_1}(-\rho-r)^{\beta_2+n-1}\mathbf 1_{(0,-\rho)}(r)\,dr.
\]
By the assumptions \(\beta_1>-1\), \(\beta_3>0\), \(\beta_2+n-1>-1\), both \(\mu_1\) and \(\mu_2\) are nonzero finite  compactly supported measures in \((0,\infty)\).

As a consequence, \eqref{lnx_} can be rewritten as
\begin{equation}\label{d1d2}
\int y^\theta\,\mu_0(dy)
=
d_1\int y^\theta\,\mu_1(dy)
+
d_2\int (-y)^\theta\,\mu_2(dy),
\qquad \theta \in\mathbb N_0.
\end{equation}
This is enough to claim that $d_2=0$. Indeed,
since \(\mu_j\), \(j=0,1,2\), are finite and compactly supported,  their Laplace transforms can be represented by the moment series
\[
F_j(z):=\int e^{zy}\,\mu_j(dy)=\sum_{\theta=0}^\infty\,\tfrac{z^\theta}{\theta!}\,\int\,y^\theta\,\mu_j(dy),\quad z\in\mathbb R,\qquad j=0,1,2.
\]
In view of \eqref{d1d2}, we get
\[
F_0(z)=d_1F_1(z)+d_2F_2(-z),\qquad z\in\mathbb R.
\]
Letting \(z\to-\infty\), we get
\[
F_0(z)\to0,\qquad F_1(z)\to0,
\]
because \(\mu_0,\mu_1\) are supported in \((0,\infty)\). On the other hand, since \(\mu_2\neq0\), there exists \(\varepsilon>0\) such that \(\mu_2([\varepsilon,-\rho])>0\), and therefore
\[
F_2(-z)=\int e^{-zy}\,\mu_2(dy)
\ge e^{-z\varepsilon}\mu_2([\varepsilon,-\rho]) \stackrel{z\to-\infty}{\longrightarrow}\infty.
\]
This is impossible unless \(d_2=0\), which implies \( \delta_2=0\).

Finally, changing the variable $t=\tfrac{1}{1+u}$ in the integral defining  $\ell_1(\beta_3+\theta)$, we obtain for $\theta\in \mathbb N_0$,
		\begin{align*}
					\E\left[ \tfrac{(1+\alpha U)^{n-1}}{(1+U)^{\theta+n-1}}\right]
					\propto \int_0^\infty \tfrac{(1+\alpha u)^{n-1}}{(1+u)^{\theta+n-1} } \tfrac{u^{b-\lambda-1}}{(1+\alpha u)^{a-\lambda}(1+u)^{b+\lambda}}du.
				\end{align*}
				This proves that $U$ has a density proportional to 
            \eqref{exactlaw},
                as in the previous case.
                
			\end{enumerate}
		\end{proof}

		\subsection{Completion of the proof and matching of parameters}\label{sec:completion}

		At this stage, we have already shown that $U\sim \mathrm{GB}_{II}(-\lambda,a,b;\alpha)$. Thus, \eqref{final} yields $X\sim \mathrm{GB}_{II}(\lambda,a,b;\alpha)$. It remains to identify the laws of $Y$ and $V$. 

		Due to the symmetry between $(M_X,M_U)$ and $(M_Y,M_V)$ in \eqref{sym1} and \eqref{sym2}, we conclude that $$Y\sim \mathrm{GB}_{II}(\widetilde\lambda,\widetilde a,\widetilde b;\beta)\quad \mbox{and}\quad V\sim \mathrm{GB}_{II}(-\widetilde\lambda,\widetilde a,\widetilde b;\beta).$$ Referring to \eqref{Lindep}, in view of \eqref{LX/LU}, we get
		$$
		\tfrac{(a-\lambda)^{(\sigma)}\,(b+\lambda)^{(s+\theta)}}{(a+\lambda)^{(\theta)}\,(b-\lambda)^{(s+\sigma)}}\stackrel{\eqref{LX/LU}}{=}\tfrac{L_X^{(\alpha)}(\theta,\sigma,s)}{L_U^{(\alpha)}(\sigma,\theta,s)}\stackrel{\eqref{Lindep}}{=}\tfrac{L_V^{(\beta)}(\theta,\sigma,s)}{L_Y^{(\beta)}(\sigma,\theta,s)}\stackrel{\eqref{LX/LU}}{=}\tfrac{(\widetilde a+\widetilde \lambda)^{(\sigma)}\,(\widetilde b-\widetilde\lambda)^{(\theta+s)}}{(\widetilde a-\widetilde \lambda)^{(\theta)}\,(\widetilde b+\widetilde \lambda)^{(\sigma+s)}}.$$
		Comparing the first and the last expression in the above sequence of equalities, valid for any $s,\theta,\sigma\in\mathbb N_0$, we conclude that $\widetilde\lambda=-\lambda$, $\widetilde a=a$, $\widetilde b=b$, which ends the proof of Theorem \ref{charac}.

\subsection*{Acknowledgments}
This research was funded in part by National Science Centre, Poland, \\\mbox{2023/51/B/ST1/01535}. We thank the anonymous referees for their detailed comments, which led to improvements in the paper.

		\bibliographystyle{amsplain}
		\bibliography{BETAII}

	\end{document}